\newtheorem{remark}[theorem]{Remark}
\newcommand{\Lie}{\mathrm{Lie}}
\newcommand{\de}{\delta}
\newcommand{\Dt}{\Delta t}
\newcommand{\Pb}{Q_{\Delta t}} 
\newcommand{\Pbl}{Q_{\Delta t}^{\mathrm{Lie}}} 
\newcommand{\Pbs}{Q_{\Delta t}^{\mathrm{Strang}}} 
\newcommand{\mb}{\mu_{Q}} 
\newcommand{\mbl}{\mu_{\Lie}}
\newcommand{\mo}{\mu_{P}} 
\newcommand{\Po}{P_{\Delta t}} 
\newcommand{\si}{\sigma}
\newcommand{\diam}{\mathrm{diam}}
\newcommand{\arctanh}{\mathrm{atanh}}
\newcommand{\Ppath}{P_{0:T}}
\newcommand{\Qpath}{Q_{0:T}}
\newcommand{\Hpp}{H_{\mathrm{pp}}}
\newcommand{\des}{\delta_{\sigma'}(\sigma)}
\newcommand{\figpercent}{0.65}
\newcommand{\firstRef}[1]{\textcolor{black}{#1}}
\newcommand{\secondRef}[1]{\textcolor{black}{#1}}
\newcommand{\ouredit}[1]{\textcolor{black}{#1}}
\title{Information metrics for long-time errors in splitting schemes for stochastic dynamics and parallel KMC.\thanks{The authors are partially supported by NSF (DMS-1515712, DMS-1109316)}}
\author{Konstantinos Gourgoulias\footnotemark[2], Markos A. Katsoulakis\footnotemark[2], Luc Rey-Bellet\footnotemark[2]}
\begin{document}

\maketitle

\renewcommand{\thefootnote}{\fnsymbol{footnote}}

\footnotetext[2]{Department of Mathematics and Statistics, University of Massachusetts, Amherst, MA, USA. (gourgoul@math.umass.edu)
}

\renewcommand{\thefootnote}{\arabic{footnote}}

\begin{abstract}
We propose an information-theoretic approach to analyze the long-time behavior of numerical splitting schemes for stochastic dynamics,  focusing primarily on Parallel Kinetic Monte Carlo (KMC)  algorithms. 	
Established methods for  numerical operator splittings  provide error estimates in finite-time regimes, in terms of the order of the local error and the associated commutator. 
Path-space information-theoretic tools such as the relative entropy rate (RER)  allow us  to control long-time error through  commutator calculations. 
Furthermore, they give rise to an {\em a posteriori} representation of the error which can thus be tracked  in the course of a simulation. 
Another outcome of our analysis is the derivation of   a path-space  information criterion for comparison (and possibly design) of numerical  schemes, in analogy to classical information criteria for model selection and discrimination. In the context of Parallel KMC, our analysis allows us to select schemes with improved numerical error and more efficient processor communication. We expect that such a path-space  information perspective on numerical methods will be broadly applicable in stochastic dynamics, both for the finite and the long-time regime.
\end{abstract}

\begin{keywords}
parallel kinetic monte carlo, operator splitting schemes, information theory, relative entropy, relative entropy rate, long-time errors, graph connectivity, information criteria.  
\end{keywords}

\begin{AMS}
65C05, 65C20, 82C20
\end{AMS}

\section{Introduction}
\label{sec:Intro}
Recently, schemes that depend on operator splitting have found wide applicability within the domain of simulation of complex chemical reaction systems, biological systems or those that can be modelled by appropriate Markov processes, for example interacting particle systems. The recipe of splitting the system into components that can be simulated separately in an appropriate manner has led to more efficient algorithms, sometimes because some of the components can be solved explicitly, as in chemical reaction systems~\cite{Janhke}, and others because the splitting allows for parallel computations~\cite{Arampatzis:2012, spparks}.

In parallel with the development of those algorithms, there has also been a growing amount of work towards the numerical analysis of splitting methods for stochastic dynamics in different contexts~\cite{Janhke,Arampatzis:2012, Arampatzis:2014, Petzold,Engblom, Bayati}. In particular, for the case of Parallel Lattice Kinetic Monte Carlo (PL-KMC),  the authors in~\cite{Arampatzis:2012} developed a general framework, based on semigroup theory, that connects lattice decompositions to operator splitting. Then, in~\cite{Arampatzis:2014}, error estimates were provided for bounded time intervals along with comparisons between different splitting schemes. One of the important contributions of the work was to highlight the connection of the error with the commutator associated with the splitting and how it affects the efficiency of the scheme.

Although classical techniques in numerical analysis, such as the study of the local error of the splitting scheme and expansions of the global error~\cite{Talay-Tubaro}, work well in providing error estimates for bounded intervals, the information they provide is not of great use when the focus is on long-time results. Given that a common goal is sampling from a stationary distribution and convergence occurs for large simulation times, it thus makes sense to develop methodologies for the study of long-time errors. Approaches to tackling this problem are varied. For instance, in the case of SDEs, study of the long-time behavior has been done by employing Poisson equations~\cite{MST10}. For Lie-Trotter splittings, backward error analysis~\cite{Abdulle} has been used to study the performance of the schemes in capturing the stationary distribution when simulating Langevin dynamics (but see also~\cite{Leimkuhler}). 

The main idea in this work is information-theoretical in nature, following similar successful approaches studying the irreversibility of numerical schemes~\cite{irreversibility_KPR}, sensitivity analysis~\cite{sensitivity_PK}, and quantifying the loss of information in coarse-graining of particle systems~\cite{Eva-coarse-grain}. In those, the authors use the relative entropy, along with other quantities derived from it, to both generate insights and provide computable quantities that are useful during a simulation. Besides that, approaching the problem from information theory still allows one to infer results about more classical metrics of error. For instance, one can derive upper bounds for the weak error of specific observables through the use of variational inequalities~\cite{Dupuis}.  

Our goal is to use another derived quantity, the relative entropy on path space per unit time, or relative entropy rate (RER), to quantify the long-time loss of information when using a splitting scheme. For our comparison, we fix a time step $\Dt$ and then compare the $\Dt$-skeleton chain arising from the exact process with the discrete chain we get from the approximate process. Through rigorous asymptotics, \ouredit{we provide an {\em a posteriori} error expansion of RER in terms of $\Dt$} and connect RER with quantities central to the classical analysis of splitting schemes, like the commutator and the order of the local error of the splitting method. After deriving \ouredit{computable estimators from our {\em a posteriori} expansions} for the highest-order term coefficients, we estimate them with the use of SPPARKS~\cite{spparks}, a parallel Kinetic Monte Carlo simulator, and use them to compare two well-known splitting schemes, the Lie and Strang splittings. Also, we illustrate how a practitioner can use the RER as an information criterion for selecting schemes that takes into account both long-time accuracy and communication cost. We then proceed to link the connectivity of the exact process with the RER asymptotics, which in turn  allows for greater generality in the study of different operator splittings. 

The plan for the following sections is as follows. \ouredit{In Section \ref{sec:Background}, we provide the necessary background for KMC, PL-KMC, construction and analysis of operator splitting schemes. Section~\ref{ssec:info_theory_concepts} introduces the pathwise relative entropy and relative entropy per unit time, which are the principal tools used in this work. In Section~\ref{sec:Long_time} we discuss the use of the relative entropy rate as a metric for studying the long-time loss of information that operator splitting schemes can have and motivate the use of asymptotic expansions for its study. Section~\ref{sec:IPS-PKMC} is particularly important, as we study schemes through the RER in the context of stochastic particle systems and continue to Section~\ref{sec:ising_example} with some discussion about time-step selection and the balance between error and communication in parallel KMC.  Then, in Section~\ref{sec:info-crit}, we highlight some connections between the proposed framework and model selection with information criteria. Section~\ref{sect:conn} studies the RER for operator splitting schemes in a more general setting with the use of ideas from graph theory. Finally, in Section~\ref{sec:transient}, we demonstrate that the RER can also be applied in transient regimes, before the simulation has converged to stationarity.}

\section{Background}
\label{sec:Background}
\ouredit{Consider that the stochastic process of interest is} an ergodic Continuous Time Markov Chain (CTMC) $X_t$ on a finite, but possibly still significantly large, state space $S$. This stochastic process can be completely defined by its \textit{transition rates}, $q(\si,\si')$, which describe the probability of an update from state $\si$ to state $\si'$ in an infinitesimal period of time. \ouredit{That is,}
\begin{align}
P(X_{t+\Dt}=\si'|X_t=\si)=P_{\Dt}(\si,\si')=q(\si,\si')\Dt+o(\Dt), \si\neq \si'.
\label{eq:infinitesimal_description}
\end{align}
Kinetic Monte Carlo (KMC) works by simulating the embedded Markov Chain $Y_n=X_{t_n}$, with jump times $t_n, t_n\sim \exp(\lambda)$. The parameter $\lambda(\si)$ is the total rate when the system is at state $\si$, 
\begin{align}
    \lambda(\si)=\sum_{\underset{\si'\in S}{\si'\neq \si}}q(\si,\si').
    \label{eq:total_rate}
\end{align}
This allows us to write the transition probabilities of the embedded Markov Chain  $p(\si,\si')=q(\si,\si')/\lambda(\si)$. \ouredit{We can also define the infinitesimal generator $L$ that corresponds to the Markov chain as follows. First, consider $f$: bounded and continuous function} on the state space $S$. Then, $L$ acts on $f$ at the state $\si$ as
\begin{align}
    L[f](\si)=\sum_{\si'\in S}q(\si,\si')\left(f(\si')-f(\si)\right).
    \label{eq:infinit_gen}
\end{align}
Note that $L[\de_{\si'}](\si)=q(\si,\si')$ for all states $\si,\si'$,\firstRef{where $\de_{\si'}(\si)=\de(\si,\si')$ is a Dirac probability measure. We shall also use the notation $L^k$ for the resulting operator after $k$ successive compositions of $L$. Because $L^k[\de_{\si'}](\si)=L^{k-1}[L[\de_{\si'}]](\si)$, we see that, for any $k$,  $L^k[\de_{\si'}](\si)$ is a computable object that depends on the transition rates.}

\ouredit{Under fairly general conditions~\cite{Kipnis-Landim}, the transition probability of the Markov process can be written as in semigroup form, i.e.\ $P_t(\si,\si')=e^{Lt}\delta_{\si'}(\si)$}. \ouredit{In the case of interest to us, $L$ is going to be a bounded operator and such operators allow for a representation of the semigroup with a series expansion.}

\begin{lemma}
\label{lem:semigroup_expansion}
Let $L$ be a linear \& bounded operator, $L:C_{b}(S)\to C_{b}(S)$, with $C_{b}(S)$ being the set of continuous and bounded functions on the space $S$.  Then $L$ generates a uniformly continuous semigroup $e^{tL}$ which we can express in power series form.
\begin{align}
e^{tL}&=\sum_{k=0}^{\infty}\frac{t^k}{k!}L^{k}.
              \label{eq:semigroup_expansion_formal}
\end{align} 
\end{lemma}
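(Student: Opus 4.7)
The plan is to follow the standard functional-analytic construction of the exponential of a bounded operator, working in the Banach algebra $\mathcal{B}(C_{b}(S))$. The first step is to show that the series $\sum_{k=0}^{\infty}\frac{t^{k}}{k!}L^{k}$ converges absolutely in operator norm: since $L$ is bounded, $\|L^{k}\|\le\|L\|^{k}$, so the partial sums form a Cauchy sequence and the limit, call it $T(t)$, satisfies $\|T(t)\|\le e^{t\|L\|}$. This already defines a bounded operator for every $t\ge 0$.

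Next I would verify the properties of a uniformly continuous semigroup. The base case $T(0)=I$ is immediate from the $k=0$ term. The semigroup law $T(t+s)=T(t)T(s)$ reduces to a Cauchy-product calculation: regrouping the double sum by total degree $n=j+k$ and invoking the binomial theorem together with the commutativity of powers of $L$ yields $T(t)T(s)=\sum_{n}\frac{(t+s)^{n}}{n!}L^{n}=T(t+s)$. Uniform continuity then follows from the tail bound $\|T(t)-I\|\le e^{t\|L\|}-1\to 0$ as $t\to 0^{+}$. Finally, differentiating the series term-by-term, which is justified by absolute convergence in the operator-norm topology, gives $\frac{d}{dt}T(t)=LT(t)$, identifying $L$ as the infinitesimal generator.

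The only genuinely delicate step is the rearrangement required for the Cauchy product, but this is standard once absolute convergence in the Banach algebra has been secured. In the present setting the state space $S$ is finite, so $C_{b}(S)$ is finite-dimensional and the generator $L$ defined in \eqref{eq:infinit_gen} is automatically bounded (its norm is controlled by the maximum of the total rate $\lambda(\si)$ over $\si\in S$), which removes any subtlety about exchanging limits, sums, and derivatives in this setting.
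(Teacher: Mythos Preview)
Your argument is correct and is precisely the standard proof one finds in the cited reference. The paper itself does not give a proof at all: it simply records that this is a classical result and points to Chapter~1, page~2 of Pazy~\cite{Pazy:1983}. So your write-up is strictly more detailed than what appears in the paper, and the content you supply (absolute convergence in $\mathcal{B}(C_b(S))$, Cauchy product for the semigroup law, the tail estimate for uniform continuity, and termwise differentiation to identify the generator) is exactly the argument Pazy gives. Your closing remark that in the finite-$S$ setting boundedness of $L$ is automatic, with norm controlled by $\max_\sigma \lambda(\sigma)$, is a useful contextual observation that the paper leaves implicit.
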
 
\begin{proof}
This is a classical result for which many references exist, see for example chapter 1, page 2 of Pazy A.  \cite{Pazy:1983}.
\end{proof}

Thus, making use of Lemma \ref{lem:semigroup_expansion}, we can write the transition probabiliy as
\begin{align}
P_{t}(\si,\si')&=e^{tL}\de_{\si'}(\si)=\sum_{k=0}^{\infty}\frac{t^k}{k!}L^{k}[\de_\si'](\si),\ \si,\si'\in S.
              \label{eq:semigroup_expansion}
\end{align} 

\subsection{Constructing approximations by semigroup splitting}
\label{sect:FSKMC}

We will now give the foundations of approximations by splitting methods, as applied to the simulation of CTMCs and proceed with how those ideas are applied in the case of parallel  lattice KMC.

As mentioned earlier, the transition probability of the CTMC of interest can be written as  $e^{tL}\de_{\si'}(\si)$. The goal is for us to design a splitting scheme that can approximate the action of $e^{tL}$. In our context, this leads to a new CTMC. One way to build such a scheme is to start with a splitting of the infinitesimal generator $L$ \eqref{eq:infinit_gen} into \ouredit{components $L_1, L_2$ with $L=L_1+L_2$. Then, if we consider a positive $T$} and by using the Trotter product formula~\cite{Trotter:1959},  we have
\begin{align}
e^{TL}=\lim_{n\to \infty}(e^{T/nL_1}e^{T/nL_2})^n.
\label{eq:trotter}
\end{align}
Correspondingly, if we now fix $n\in\mathbb{N}$ and set $\Dt=T/n$, we can write approximations of $e^{TL}$ by using \eqref{eq:trotter}. \ouredit{For example, two such approximations are:}
\begin{equation}
    \begin{aligned}
        e^{TL}&\simeq \left (e^{\Dt L_1} e^{\Dt L_2}\right )^{n},\text{ (Lie),}\\
        e^{TL}&\simeq \left (e^{\Dt/2 L_1} e^{\Dt L_2}e^{\Dt/2 L_1}\right )^{n}, \text{ (Strang)}.
    \end{aligned}
    \label{eq:two_main_split}
\end{equation}
Therefore for a one step transition from $t=0$ to $\Dt$, \eqref{eq:two_main_split} can be written as
\begin{equation}
\begin{aligned}
e^{L\Dt}&\simeq e^{\Dt L_1}e^{\Dt L_2},\\
e^{L\Dt}&\simeq e^{\Dt/2L_1}e^{\Dt L_2}e^{\Dt/2L_1}.
\end{aligned}
\label{eq:one_step_split}
\end{equation}

\firstRef{Operator splittings can also be carried out with multiple components, such as $L=L_1+L_2+L_3+L_4$. Such a splitting is used for 2D lattice decompositions in SPPARKS~\cite{spparks}. All arguments can be simply extended to those cases, but we stick to two components, $L_1,L_2$, for notational convenience.}

Throughout this work, we use $\Po(\si,\si')$ to denote the probability $e^{L\Dt}\de_{\si'}(\si)$ and $\Pb(\si,\si')$ for the approximations arising from splittings of the semigroup.  Since $L$ is a bounded operator, we can \ouredit{express $\Po$ as expansion  \eqref{eq:semigroup_expansion}}. If we pick $L_1,L_2$  so that they are also bounded, then we can express $\Pb$ as an expansion too. For example, for the Lie splitting  
\begin{align}
\exp(\Dt L_1)\exp(\Dt L_2)\de_\si'(\si)
&=\sum_{k=0}^{\infty}\frac{\Dt^k}{k!}\left(k!\cdot \sum_{m=0}^{k}\frac{L_1^m}{m!}\cdot \frac{L_2^{k-m}}{(k-m)!}\right)\de_{\si'}(\si),
\label{eq:lie_exp}
\end{align}
\ouredit{which can be showed by multiplying the semigroup expansions of $\exp(\Dt L_1)$ and $\exp(\Dt L_2)$.} Thus, if we use the notation:
\begin{align}
\label{eq:L_QofLie}
L_Q^k:&=k!\cdot \sum_{m=0}^{k}\frac{L_1^m}{m!}\cdot \frac{L_2^{k-m}}{(k-m)!}
\end{align}
we can write~\eqref{eq:lie_exp} in the form
\begin{align}
\Pb(\si,\si')=\sum_{k=0}^{\infty}\frac{\Dt^k}{k!}L^k_{Q}[\de_{\si'}](\si).
\label{eq:gen_split_power_exp}
\end{align}
\ouredit{By the definition of $L_Q^k$ in Equation~\eqref{eq:L_QofLie},} $L_Q^0=I$, $L_Q^1=L$, $L_Q^2=(L_1^2+L_2^2+2L_1L_2)$, and so on, \ouredit{for the case of the Lie splitting}. By a similar argument, we can write an expansion like~\eqref{eq:gen_split_power_exp} for other \ouredit{operator splitting approximations}. \firstRef{In general, $L_Q$ is not a generator of a Markov process and, in that case, $L^k_Q$ is not equal $L_Q$ after $k$ compositions but is defined in the context of the expansion in~\eqref{eq:gen_split_power_exp}. The slight abuse of notation allows us to compare the expansion of the exact process~\eqref{eq:semigroup_expansion} with expansions of the approximating schemes of the form~\eqref{eq:gen_split_power_exp}.} 

One way to compare the accuracy of using $\Pb$ as opposed to $\Po$ is to calculate the local error between expansion~\eqref{eq:semigroup_expansion} and~\eqref{eq:gen_split_power_exp}. As an example, here are the corresponding relations for the Lie and Strang splittings. We use $\Pbl, \Pbs$ \ouredit{for Lie and Strang respectively}. We will also use the notation $[L_1,L_2]:=L_1L_2-L_2L_1$ \ouredit{to denote the operator that} captures the failure of $L_1$ and $L_2$ to commute. \ouredit{By using the expansions}~\eqref{eq:semigroup_expansion},~\eqref{eq:gen_split_power_exp}, we can show that 
\begin{align}
 \Po(\si,\si') &=\Pbl(\si, \si') +\frac{1}{2}[L_1,L_2]\de_{\si'}(\si)\Dt^2+O(\Dt^3),\label{eq:comm-lemma-Lie}\\
 \Po(\si,\si') &=\Pbs(\si,\si')+\frac{1}{24}\left([L_1,[L_1,L_2]]-2[L_2,[L_2,L_1]]\right)\de_{\si'}(\si)\Dt^3 \label{eq:comm-lemma-Strang}\\
 &+O(\Dt^4).\nonumber
\end{align}
From \ouredit{Relations~\eqref{eq:comm-lemma-Lie} and~\eqref{eq:comm-lemma-Strang}}, we observe that the Strang splitting has a better local error compared to  Lie \ouredit{($\Dt^3$ versus $\Dt^2$)}. \firstRef{Therefore, if we prescribe an error tolerance, the Strang scheme will be able to accommodate a larger $\Dt$ than the Lie scheme. With a larger $\Dt$, we will be able to take larger steps with the same tolerance during the simulation, and this is  especially important for Parallel KMC, as we strive for balance between error accumulation and efficiency.}

To be able to discuss more general \ouredit{operator splitting} approximations to $\Po$, we introduce the following helpful lemma.
\begin{lemma}[Local order of error \& commutator]
\label{lem:local_order_of_error}
Let  $\Po(\si,\si')=e^{L\Dt}\delta_{\si'}(\si)$ and $\Pb(\si,\si')$ an approximation of $\Po$ via a splitting scheme. Then, there is a function $C:S\times S\to \mathbb{R}$ and an integer $p$, $p>1$, such that

\begin{align}
\Po(\si,\si') = \Pb(\si,\si')+C(\si,\si')\Dt^p + o(\Dt^p).
\label{eq:prop:local_order_of_error}
\end{align}
\end{lemma}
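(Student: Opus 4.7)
The plan is to prove this by comparing the power-series expansions of $\Po$ and $\Pb$ term-by-term in $\Dt$, and identifying the first order at which they differ.

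First, I would invoke Lemma~\ref{lem:semigroup_expansion} to write
\begin{equation*}
\Po(\si,\si') = \sum_{k=0}^{\infty}\frac{\Dt^k}{k!}L^{k}[\de_{\si'}](\si).
\end{equation*}
Next, since each $L_i$ in the splitting is itself a bounded operator on $C_b(S)$ (recall $S$ is finite), each factor $e^{\Dt L_i}$ admits its own power-series representation via Lemma~\ref{lem:semigroup_expansion}. Multiplying these series together and collecting powers of $\Dt$---exactly as illustrated in~\eqref{eq:lie_exp}--\eqref{eq:gen_split_power_exp} for the Lie case, and analogously for Strang or any finite product of such exponentials---yields an expansion
\begin{equation*}
\Pb(\si,\si') = \sum_{k=0}^{\infty}\frac{\Dt^k}{k!}L^k_Q[\de_{\si'}](\si),
\end{equation*}
where each $L_Q^k$ is a finite linear combination of compositions of $L_1$ and $L_2$ depending on the particular splitting.

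Second, I would verify the two low-order matching conditions $L_Q^0=I$ and $L_Q^1=L$. The zeroth-order term is $I$ because each factor $e^{\Dt L_i}$ reduces to the identity at $\Dt=0$. The first-order term is $L$ because differentiating a product $e^{\Dt a_1 L_{i_1}}\cdots e^{\Dt a_r L_{i_r}}$ with $\sum_j a_j L_{i_j}=L_1+L_2=L$ at $\Dt=0$ yields exactly $L$; this consistency condition is built into any valid splitting approximation of $e^{L\Dt}$, and one verifies it directly for the Lie and Strang cases from~\eqref{eq:L_QofLie} (giving $L_Q^1=L_1+L_2$) and its Strang analogue ($\tfrac12 L_1 + L_2 + \tfrac12 L_1$).

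Third, subtracting the two expansions gives
\begin{equation*}
\Po(\si,\si') - \Pb(\si,\si') = \sum_{k=2}^{\infty}\frac{\Dt^k}{k!}\bigl(L^k - L_Q^k\bigr)[\de_{\si'}](\si),
\end{equation*}
since the $k=0,1$ terms cancel. Assuming the splitting is not exact, let $p\geq 2$ be the smallest integer for which $(L^p-L_Q^p)[\de_{\si'}](\si)$ is not identically zero on $S\times S$, and set
\begin{equation*}
C(\si,\si') := \frac{1}{p!}\bigl(L^p - L_Q^p\bigr)[\de_{\si'}](\si).
\end{equation*}
The tail $\sum_{k>p}\frac{\Dt^k}{k!}(L^k-L_Q^k)[\de_{\si'}](\si)$ is absolutely convergent on the finite state space and is $O(\Dt^{p+1})=o(\Dt^p)$, which gives~\eqref{eq:prop:local_order_of_error}. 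A consistency check against the explicit Lie and Strang formulas~\eqref{eq:comm-lemma-Lie}--\eqref{eq:comm-lemma-Strang} recovers $p=2$ with $C=\tfrac12[L_1,L_2]\de_{\si'}$ and $p=3$ with the nested-commutator expression, respectively.

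The only delicate point---really more bookkeeping than obstacle---is isolating which compositions of $L_1,L_2$ appear in $L_Q^k$ in the general multi-factor case and confirming that the $k=0,1$ terms always coincide with $I$ and $L$; this reduces to a Cauchy-product argument together with the assumption that the splitting reproduces $L=L_1+L_2$ to first order, which is standard and implicit in calling the scheme a \emph{splitting approximation} of $e^{\Dt L}$.
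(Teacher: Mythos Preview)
Your proposal is correct and follows essentially the same route as the paper: subtract the two series expansions~\eqref{eq:semigroup_expansion} and~\eqref{eq:gen_split_power_exp}, and take $p$ to be the first index at which $L^k\neq L_Q^k$. The paper's proof is a two-line sketch (``immediate by using representations''), whereas you additionally justify $p>1$ by checking $L_Q^0=I$ and $L_Q^1=L$ and argue that the tail is $o(\Dt^p)$; these are details the paper leaves implicit.
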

We will refer to $C(\si,\si')=(L^p-L_Q^p)\de_{\si'}(\si)$ as the \textit{commutator} and to $p$ as the \textit{order of the local error}.
\begin{proof}
The result is immediate by using representations \eqref{eq:semigroup_expansion}, \eqref{eq:gen_split_power_exp}, since for $\si,\si'\in S$,
\begin{align*}
\Po(\si,\si') - \Pb(\si,\si')=\sum_{k=0}^{\infty}\frac{\Dt^k}{k!}\left(L^k-L_{Q}^k\right)[\de_\si'](\si).
\end{align*}
Then, $p$ is the smallest non-negative integer such that $L^p\neq L_Q^p$.  \ouredit{This of course implies that $L^k=L_Q^k$ for $k<p$.}
\end{proof}

 \firstRef{Equations~\eqref{eq:comm-lemma-Lie} and~\eqref{eq:comm-lemma-Strang} are examples of this lemma for the cases of the Lie and Strang splittings respectively. Although in the case of Lie we were able to write the form of $L_Q^k$ explicitly for all $k$ (Equation~\eqref{eq:L_QofLie}), this is not a requirement and we only need to know $L_Q^p$ to compute the commutator and that object arises naturally when subtracting the two expansions, \eqref{eq:semigroup_expansion} and \eqref{eq:gen_split_power_exp}.}

\begin{remark}
Relation \eqref{eq:prop:local_order_of_error} is central to the numerical analysis of splitting schemes, as it is the starting point to the derivation of upper bounds for the local and global error \cite{Arampatzis:2012,Arampatzis:2014,Petzold}. \firstRef{Even though our focus in  this manuscript is on operator splitting schemes for parallel KMC, as long as an expression for the local error such as Equation~\eqref{eq:prop:local_order_of_error} exists, a similar analysis can be carried out for other types of schemes.}
\end{remark}

As we will see in the follow-up, the commutator has many desired properties. Since it is equal to $(L^p-L^p_Q)\de_{\si'}[\si]$, \ouredit{ and both $L^p[\de_{\si'}](\si)$ and $L^p_Q[\de_{\si'}](\si)$  depend on the known transition rates $q$, the commutator is a} \textit{computable} object for every pair of states $(\si,\si')$.
\ouredit{We will see in Section~\ref{sec:a-posteriori-RER} that for parallel KMC the work required in order to compute the commutator can scale appropriately with the system size.}

\subsection{Parallel Lattice KMC and splitting schemes}
\label{sect:ParallelLatticeKMC}
We consider the case of PL-KMC as an application of the ideas in the previous section concerning approximations by semigroup splitting. Further discussion on the ideas of this section can be found in Arampatzis et al. \cite{Arampatzis:2012, Arampatzis:2014}.

Our main motivating example for PL-KMC is an interacting particle system. Let $\Lambda\subset \mathbb{Z}^d$ be a square lattice with $N$ sites. At each site of it, $x\in \Lambda$, we define an order parameter $\si(x)\in \Sigma=\{0,1,\ldots,K\}$. This parameter can be, for example, the species that occupies the lattice site $x$. For instance, in the Ising model, $\si(x)=0$ would imply that the lattice site $x$ is empty and $\si(x)=1$ that a particle occupies $x$. The CTMC of interest is $\{\sigma_t\}_{t\geq 0}$, $\si_t=\{\si_t(x):x\in\Lambda \}$, with state space $S=\Sigma^{\Lambda}$. At every $t$, $\si_t$ represents a snapshot of the different occupancies of the lattice. We can describe the dynamics of such a system by looking at the individual spin changes \ouredit{at different lattice sites}. Two more properties that are common among such systems and which we will also assume is that the transitions between states of $\sigma_t$ are \textit{localized} and that \ouredit{they} only involve a finite number of lattice sites per transition step. Localization implies that the probability that a certain transition will happen (the order parameter of a finite collection of lattice sites will change) only depends on the values of $\sigma$ on a neighborhood around those lattice sites. In other words, transitions depend on local (neighborhood) rather than global (whole lattice) information (see Figure \ref{fig:lattice_decomp}). 

We can formalize \ouredit{localization} by looking at the implication for the transition rates of the process $\si_t$. Following the notation introduced in \cite{Arampatzis:2012}, let us assume that at time $t$, $\si_t=\sigma$. Now, \ouredit{we can express the transition rate for a jump} to a new state $\si^{x,\omega}$ as

\begin{align}
q(\si,\si^{x,\omega})=q(x,\omega;\sigma)
\label{eq:trans_rates}
\end{align}
where $x\in \Lambda$ and $\omega$ is an index of the set of all possible configurations, $S_x$, that correspond to an update at a lattice neighborhood $\Omega_x$ of the site $x$. When the only allowed transition is spin-flipping, that is, starting with $\si$, we can only go to states $\si'$ that differ in the order parameter of one lattice site $x$, we will write $\si'$ as $\si^x$ to denote the resulting state after the transition. It follows that for $\sigma_t$ we have an infinitesimal generator:
\begin{align}
    L[f](\si)=\sum_{x\in \Lambda}\sum_{\omega\in S_x}q(x,\omega;\sigma)\left(f(\si^{x,\omega})-f(\si)\right).
    \label{eq:back:gen}
\end{align}

We can simulate the process $\sigma_t$ via standard KMC, as described in the beginning of Section~\ref{sec:Background}. Then the system would progress in time steps $t_n\sim \exp(\lambda(\si))$, where $\lambda(\si)$ is the total rate when the system is at state $\si$, as defined in~\eqref{eq:total_rate}. Since the total rate scales with the size of the lattice and the magnitude of the transition rates, a large or highly reactive model would be simulated slowly by classical KMC. The goal then, as realized in \cite{Arampatzis:2012}, is for a fixed $\Dt>0$ to design an approximation to the exact process $e^{\Dt L}$ via a splitting method in such a way that allows for asynchronous computations.

 To begin, we note that any decomposition of the lattice into non-overlapping sub-lattices $\Lambda_i$ also induces a decomposition of the generator \eqref{eq:back:gen}, that is 
\begin{align}
    L[f](\si)=\sum_{i=1}^{n}\sum_{x\in\Lambda_i}\sum_{\omega\in
        S_x}q(x,\omega;\sigma)\left(f(\si^{x,\omega})-f(\si)\right).
    \label{eq:back:split_gen}
\end{align}
Due to the \ouredit{localization} of the system, we can decompose  the lattice $\Lambda$ into  \ouredit{$n$ sub-lattices, $\Lambda_{i}$}, so that transitions in some sub-lattices are independent from transitions in others, see Figure~\ref{fig:lattice_decomp}. With two groups, $G_1=\{\Lambda_i:i \text{ even}\}, G_2=\{\Lambda_i:i \text{ odd}\}$, we can split $L$ into 
\begin{equation}
    \begin{aligned}
        L_j[f](\si)&:=\sum_{x\in G_j}\sum_{\omega\in
            S_x}q(x,\omega;\sigma)\left(f(\si^{x,\omega})-f(\si)\right),\ j=1,2,\\
            L[f](\si)&=L_1[f](\si)+L_2[f](\si).
    \end{aligned}
            \label{eq:splitting_to_groups}
\end{equation}

\begin{figure}[h]
\centering

					\begin{tikzpicture}[scale=0.14]
										
										\fill [fill=green!50] (0,40) rectangle (10,30);
										\fill [fill=white] (1,39) rectangle (9,31);
										
										\foreach \x in {0,1}{
											\foreach \y in {2}{
												\fill [fill=red!50] (\x*10+10*\x,\y*10) rectangle (10*\x+10+10*\x,10*\y+10);
											}
										}
										
										\foreach \x in {0,1}{
											\foreach \y in {3}{
												\fill [red!50] (\x*10+10*\x+10,\y*10) rectangle (10*\x+10+10*\x+10,10*\y+10);
										   }
										}
										
										\def\color{blue!}
										\fill [\color!40,ultra thick,fill=\color] (4,35) rectangle (6,34);
										\fill [\color!40,ultra thick,fill=\color] (5,34) rectangle (6,33);
										\fill [\color!40,ultra thick,fill=\color] (6,33) rectangle (5,36);
										\fill [\color!40,ultra thick,fill=\color] (7,35) rectangle (6,34);
										\fill [black ,ultra thick, step=1] (0,20) grid (40,40);
										\fill [black, ultra thick,step=10] (0,30) grid (40,40);
										\end{tikzpicture}
\caption{A checkerboard decomposition of a 2D lattice. Red sub-lattices correspond to group $G_1$ and white ones to $G_2$. For comparison, a nearest neighborhood region (n.n. region) is also shown (solid black cross). Transitions involving the center of that region only depend on the state of its nearest neighbors. So, if we pick the sub-lattices much larger than the size of an n.n. region, transitions in different sub-lattices belonging to the same group are independent. A site $x$ is said to belong to the boundary of its sub-lattice if part of its n.n. region is outside that sub-lattice (the green region is the collection of all such points for the first sub-lattice). If a transition occurs at such a site $x$, then an update needs to be made to the boundary information of all other sub-lattices for which $x$ belongs to a n.n. region. }
\label{fig:lattice_decomp}
\end{figure}
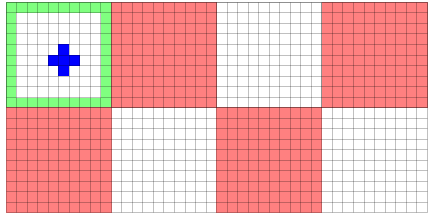

Thus, by the formulas in \eqref{eq:splitting_to_groups},  we can use the ideas of the previous section to construct splitting approximations to $e^{L\Dt}$. Those \ouredit{can also be interpreted} as computation schedules for the parallel algorithm. Such schedules set two attributes of the simulation: (a) in what order to simulate the two groups asynchronously and (b) for how much time to simulate each group per \ouredit{time-step} (which the user controls with the $\Dt$ parameter). A demonstration of how PL-KMC works is shown in Figure~\ref{fig:pkmc}.
 \begin{figure}[h]
\centering
\includegraphics[width=\figpercent\textwidth]{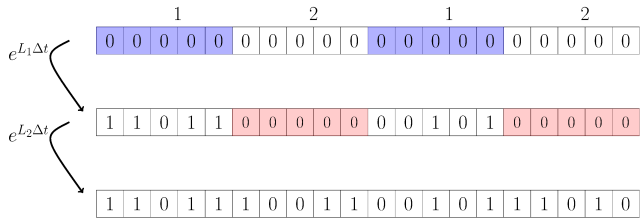}
\caption{One step of PL-KMC in the $1D$ case, where all of the spin values are set to zero initially while using the Lie splitting. After the lattice is decomposed into non-overlapping sub-lattices, here blue (indexed as $1$) and red (indexed as $2$), the algorithm proceeds by first simulating all blue sub-lattices independently by standard KMC until a time $t=\Dt$ is reached for all of them. Once that is done, the lattices in the second group are simulated in the same way. This results to the process $\si_t$ on the whole lattice being propagated forward in time by $\Dt$. In-between the simulation of each group, communication between the processes is required in order to correct for the mismatch on the boundaries of the sub-lattices. The resulting error due to the mismatch is controlled by the commutator $C$~\cite{Arampatzis:2014}.}
\label{fig:pkmc}
\end{figure}

In general, the larger the $\Dt$, the less different processes need to communicate to resolve inconsistencies during a run. This is a fact for any simulation algorithm that can be expressed in the above operation splitting framework, e.g. SPPARKS and others~\cite{Arampatzis:2012}. Since communication is the usual bottleneck of PL-KMC algorithms, a practitioner would like to pick $\Dt$ as large as possible, given a fixed tolerance. One of the important insights of the analysis in \cite{Arampatzis:2014} is that the commutator controls this relationship. Simply put, a small $C(\cdot, \cdot)$ (as defined in Lemma~\ref{lem:local_order_of_error}) allows for a larger step size $\Dt$. 

\section{Information metrics for comparing dynamics at long times} 
\label{ssec:info_theory_concepts}
We will now introduce the main tools from information theory. In later sections, our focus will be to compare the exact process, $X_t$, and an approximation of it, $Y_t$, via their $\Dt$-skeleton sub-processes. That is, given a fixed $\Dt>0$ and $M\in \mathbb{N}$, we look at the discrete-time Markov processes $X_{n\Dt}$ and $Y_{n\Dt}$ for $n\in \{0,\ldots,M\Dt\}, T=M\Dt$. For this reason, we now introduce those concepts for discrete-time processes.

Let us assume two discrete-time Markov processes $X_{n}$ and $Y_{n}$ on a countable state space $S$  with transition probabilities $P$ and $Q$ respectively. We also assume that for each process exists a corresponding unique stationary distribution $\mo$ and $\mb$. Assuming $X_{0}$ ($Y_{0}$) is distributed according to $\mo$ ($\mb$), we can then calculate the probability of a specific path for each process. For example, if we fix \ouredit{a positive integer $M, T=M\Dt$}, and pick an $\vec{x}\in S^M$, then we have
\begin{align*}
P_{0:T}(\vec{x})&=P(X_{T}=x_M,\ldots, X_{0}=x_0)=\mo(x_0)P(x_0,x_1)\cdots P(x_{M-1},x_{M}).
\end{align*}
Similarly, by changing $P$ to $Q$, we can calculate the path probability for $Y_n$. 

Assuming one would prefer a path of length $T$ of the process $Y_{n}$ to infer results about a same length path of $X_{n}$, how much information about $X_{n}$ would be lost by such a method? This is a central question in coding theory and one way to quantify the information loss is through the idea of relative entropy (RE),
\begin{align}
\label{eq:RE}
R(Q_{0:T}|P_{0:T}):=\sum_{\vec{x}\in S^M}Q_{0:T}(\vec{x})\log\frac{Q_{0:T}(\vec{x})}{P_{0:T}(\vec{x})}
\end{align}
Our definition here is with respect to the path measures $P_{0:T}, Q_{0:T}$, but we can apply the relative entropy to more general probability measures too. For this object to be properly defined, we need to have that $Q_{0:T}$ is absolutely continuous with respect to $P_{0:T}$, that is $P_{0:T}(\vec{x})=0$ implies $Q_{0:T}(\vec{x})=0$. Other important properties of the relative entropy rate are the following : \ouredit{
\begin{enumerate*}
\item  $R(Q_{0:T}|P_{0:T})\geq 0$ for any $Q_{0:T}, P_{0:T}$ (Gibbs' inequality),
\item $R(Q_{0:T}|P_{0:T})=0\Leftrightarrow P_{0:T}=Q_{0:T}$.
\end{enumerate*}}
Note though that the relative entropy does not qualify as a metric in the classical sense, as it is not symmetric and does not satisfy the triangle inequality. It can however still be thought of as a distance between distributions and is useful as a building block for other information measures. For a more complete exposition on relative entropy and its properties, see Cover and Thomas~\cite{Cover-Thomas}.

\ouredit{Although the pathwise RE is a suitable quantity to measure the similarity of the two path-measures, it is computationally demanding to calculate, especially in the case of parallel KMC, where we do not have $Q_{0:T}$ and $P_{0:T}$ explicitly. For this reason, we look at a related object, the relative entropy per unit time, or relative entropy rate (RER).} Given a probability measure $\nu_0$, $\nu_0(\vec{x})=\nu_0(x_0), \vec{x}\in S^{T}$, \ouredit{the RER with} respect to $\nu_0$ \ouredit{is defined as:}  
\begin{align}
H_{\nu_0}(Q|P):=\sum_{\vec{x}\in S^M} {\nu_0}(\vec{x})Q(x_0,x_1)\log\frac{Q(x_0,x_1)}{P(x_0,x_1)}.
\end{align}
Given another measure $\mu_0$, we can use the chain rule for the relative entropy~\cite{Cover-Thomas} to relate RE and RER as
\begin{align}
R(Q_{0:T}|P_{0:T})&=R(\mu_0|\nu_0)+\sum_{i=1}^{M}H_{\nu_i}(Q|P),\label{eq:gen_re_rer_rel}\\
\nu_k(x_0,\ldots,x_{k-1})&=\nu_{0}(x_0)\prod_{m=1}^{k-1}Q(x_{m-1},x_{m}).\nonumber
\end{align}
In particular,  when sampling from the stationary distribution corresponding to $Q$, that is $\nu_0=\mb$, then $H_{\nu_i}=H_{\mb}=H$ for all $i$. Then, 
\begin{align}
H(Q|P)&=\sum_{x_0,x_1\in S}\mb(x_0)Q(x_0,x_1)\log\frac{Q(x_0,x_1)}{P(x_0,x_1)}.\label{eq:rer}
\end{align}
This also simplifies Equation~\eqref{eq:gen_re_rer_rel} to

\begin{align}
R(Q_{0:T}|P_{0:T})&=M\cdot H(Q|P) + R(\mb|\mo).\label{eq:rel_entropy_split_PK}
\end{align}
In~\eqref{eq:rel_entropy_split_PK}, $R(\mb|\mo)$ is the relative entropy of $\mb$ with respect to $\mo$, capturing the loss of information between the exact and approximate stationary distribution. Note that $R(\mb|\mo)$ does not depend on the length of the path. Instead, the term that quantifies the dependence on $T$ is $H(Q|P)$. Therefore, any difference between the two stationary measures becomes negligible for large times, \ouredit{which is a first advantage to studying the pathwise RE through the simpler RER.}

\subsection{Information metrics and  observables}
Further justification for the fact that the RER is the right quantity to track can be given by considering time-averaged observables. For instance, if $f$ is a function of the state space, then such an observable would be
$$M\cdot F_{M}(\{X_n:n=0,\ldots,M-1\})=\sum_{k=0}^{M-1}f(X_k).$$
An important performance metric for the approximation is the weak error:
\begin{align}
|\mathbb{E}_{P[0,T]}[F_M]-\mathbb{E}_{Q[0,T]}[F_M]|,\ T=M\Dt.
\label{eq:weak_error_ta}
\end{align}
In recent work~\cite{Dupuis}, uncertainty quantification (UQ) bounds have been developed for the weak error that are of the form:
\begin{equation}
\begin{aligned}
\Xi_{-}(Q_{[0,T]}\| P_{[0,T]};M\cdot F_M)/M&\leq \mathbb{E}_{P[0,T]}[F_M]-\mathbb{E}_{Q[0,T]}[F_M]\\
&\leq \Xi_{+}(Q_{[0,T]}\| P_{[0,T]};M\cdot F_M)/M. 
\end{aligned}
\label{eq:uq_bounds_fin}
\end{equation}
The quantities $\Xi_{\pm}(Q_{[0,T]}\| P_{[0,T]};M\cdot F_M)$ are defined as goal-oriented divergences~\cite{Dupuis}, taking into account the observable $F$, and such that $\Xi_{\pm}(Q_{[0,T]}\| P_{[0,T]};M\cdot F_M)=0$, if $Q_{[0,T]},=P_{[0,T]}$ or $f$ is deterministic. \firstRef{Note that the bound in~\eqref{eq:uq_bounds_fin} is robust, see Theorem 3.4 in \cite{CD:13}, as well as  \cite{jie-scalable-info-ineq}: if we consider a positive $\eta$ and all $\Pb$ such that $R(\Pb|\Po)<\eta$, then the upper bound in~\eqref{eq:uq_bounds_fin} is attained.}

\firstRef{ Dividing~\eqref{eq:uq_bounds_fin} by $M$  and letting $M$ go to infinity gives an inequality with respect to the stationary measures $\mb,\mo$ of the scheme, $\Pb$, and the exact process, $\Po$, respectively:
\begin{align}
\label{eq:weak-error-limit}
\xi_{-}(\Pb\|\Po;f)\leq \mathbb{E}_{\mb}[f]-\mathbb{E}_{\mo}[f]\leq \xi_{+}(\Pb\|\Po;f),
\end{align}
where $\xi_{\pm}(\Pb\|\Po;f)=\lim_{M\to\infty}\Xi_{\pm}(\Qpath\|\Ppath;F)/M$. But $\xi_{\pm}$ also admit a variational representation as
\begin{equation}
\label{eq:xi-variational}
\begin{aligned}
\xi_{+}(\Pb\|\Po;f)&=\inf_{c\geq 0}\left\{\frac{1}{c}[ \lambda_{\Pb,\Po}(c)+H(\Pb\|\Po) ]\right\},\\
\xi_{-}(\Pb\|\Po;f)&=\sup_{c\geq 0}\left\{-\frac{1}{c}[ \lambda_{\Pb,\Po}(-c)+H(\Pb\|\Po) ]\right\},
\end{aligned}
\end{equation}
with $\lambda_{\Pb,\Po}(c)$ in \eqref{eq:xi-variational} to be the logarithm of the maximum eigenvalue of the matrix with entries $\Po(x,y)\exp(c\cdot (f(y)-\mathbb{E}_{\mo}[f]))$ (see~\cite{jie-scalable-info-ineq} for details). Especially when $H(\Pb|\Po)$ is small and through the asymptotic expansion of $\xi_{\pm}$, an upper bound for the weak error at stationarity can be given (following the ideas in~\cite{Dupuis, jie-scalable-info-ineq}):
\begin{align}
|\mathbb{E}_{\mb}[f]-\mathbb{E}_{\mo}[f]|&\leq \sqrt{\upsilon_{\mo}(f)}\sqrt{2H(\Pb|\Po)}+O(H(\Pb|\Po))\label{eq:linearized-ineq},\\
\upsilon_{\mo}(f)&=\sum_{k=-\infty}^{\infty}\mathbb{E}_{\mo}[f(X_k)f(X_0)].\label{eq:auto-correlation}
\end{align}
}

\firstRef{
Inequality~\eqref{eq:linearized-ineq} connects the long-time loss of accuracy that the weak error captures with the relative entropy rate and $\upsilon_{\mo}(f)$, which is the integrated auto-correlation function for the observable $f$ and a quantity we can estimate during the simulation. As a consequence of~\eqref{eq:linearized-ineq}, any further results on the asymptotic behavior of $H(\Pb|\Po)$ with respect to $\Dt$ can be simply translated to the weak error point-of-view. 
}

\section{Long-time error behavior of splitting schemes}
\label{sec:Long_time}
In this section, we compare the RER between two different processes. One of them will always be the $\Dt$-skeleton process derived from the CTMC we wish to simulate, with transition probability
\begin{align}
\Po(\si,\si')=e^{L\Dt}\de_{\si'}(\si).
\label{eq:exact_proc}
\end{align}
\ouredit{This exact $\Dt$-process will be compared with the} $\Dt$-skeleton process derived from an \ouredit{operator} splitting of \eqref{eq:exact_proc}. \ouredit{Such approximations will be denoted with $\Pb$.}
\firstRef{We note here that the discretization \eqref{eq:exact_proc}
of the original Markov process with semigroup $e^{tL}$ with respect to $\Dt$ is only
carried out  as a means  to compare the original process with the
approximations $\Pb$. The transition kernel $\Po$ is just a particular instance
of the transition matrix  of the continuous Markov process with semigroup $P_t=e^{tL}$, so there is no approximation error
 in \eqref{eq:exact_proc}.
In fact, using the $\Dt$-skeleton corresponds to sub-sampling from the CTMC at every $\Dt$.
}

Our goal is to show the dependence of the RER to various quantities of interest that are usually computed for short-time error analysis. We will see that the commutator, the order of the local error, and other quantities, make an appearance in the asymptotic results we develop. We limit our discussion to the case that $\Dt$ is in $(0,1]$, as this is the interval where splitting schemes are most accurate. We also assume throughout this section that $L$ is a bounded operator. We will often refer to the splittings previously discussed, Lie and Strang, which define discrete processes with transition probabilities
\begin{equation}
\begin{aligned}
	\Pbl(\si,\si')&=e^{L_1\Dt}e^{L_2\Dt}\delta_{\si'}(\sigma),\\
	\Pbs(\si,\si')&=e^{L_1\Dt/2}e^{L_2\Dt}e^{L_1\Dt/2}\delta_{\si'}(\sigma).
\end{aligned}
\label{eq:splittings}
\end{equation}
Here $L$ is the original generator and $L=L_1+L_2$ with $L_1, L_2$ \ouredit{assumed  bounded as operators}. \ouredit{For instance, in the case of parallel KMC, $L_1,L_2$ will be imposed by the domain decomposition of the lattice, see Figure~\ref{fig:lattice_decomp}.}

Before we move on to the analysis, we need to address a last issue. As mentioned before, our main tool will be asymptotic expansions of the RER with respect to $\Dt$. We will then use those to do comparisons for different $\Dt$, so it is important to first account for the scaling of RER with \ouredit{respect to} that parameter. The situation can be best illustrated by the worst case scenario, when the order of the local error \firstRef{between  two Markov semigroups, $\Pb^A,\ \Pb^B$, is equal to one. }
\begin{lemma}
\label{prop:rer_scaling}
Let $L_A,L_B$ be bounded generators of Markov Processes, $L_A\neq L_B$, \firstRef{with corresponding transition probabilities $\Pb^A,\Pb^B$ }. Then, 
$$
H(\Pb^B|\Pb^A)=O(\Dt).
$$
\end{lemma}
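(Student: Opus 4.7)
The plan is to expand both transition kernels using the semigroup expansion of Lemma~\ref{lem:semigroup_expansion}, and then analyze the two factors appearing in the definition of RER~\eqref{eq:rer}, namely $\Pb^B(\sigma,\sigma')$ and $\log(\Pb^B(\sigma,\sigma')/\Pb^A(\sigma,\sigma'))$, separately on the diagonal and off the diagonal of $S\times S$. Since $L_A$ and $L_B$ are bounded, we can write for $\sigma'\neq\sigma$,
\begin{align*}
\Pb^A(\sigma,\sigma') = q_A(\sigma,\sigma')\Dt + O(\Dt^2),\quad \Pb^B(\sigma,\sigma') = q_B(\sigma,\sigma')\Dt + O(\Dt^2),
\end{align*}
and on the diagonal $\Pb^A(\sigma,\sigma)=1-\lambda_A(\sigma)\Dt+O(\Dt^2)$, with the analogous expression for $\Pb^B$. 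The goal is then to show that each of the two case contributions to $H(\Pb^B|\Pb^A)$ is of order $\Dt$, and conclude by summing over the (finite) state space.

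First I would handle the off-diagonal terms $\sigma'\neq \sigma$. Their ratio behaves like
\begin{align*}
\frac{\Pb^B(\sigma,\sigma')}{\Pb^A(\sigma,\sigma')} = \frac{q_B(\sigma,\sigma')}{q_A(\sigma,\sigma')} + O(\Dt),
\end{align*}
so $\log(\Pb^B/\Pb^A)$ is $O(1)$, while the weight $\Pb^B(\sigma,\sigma')$ itself carries a factor $\Dt$; their product is $O(\Dt)$. Next, for the diagonal terms, both kernels are $1-O(\Dt)$, and a straightforward Taylor expansion gives
\begin{align*}
\log\frac{\Pb^B(\sigma,\sigma)}{\Pb^A(\sigma,\sigma)} = (\lambda_A(\sigma)-\lambda_B(\sigma))\Dt + O(\Dt^2),
\end{align*}
while the weight $\Pb^B(\sigma,\sigma)$ is $O(1)$; again the product is $O(\Dt)$. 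Multiplying by $\mu_B(\sigma)$ and summing over $\sigma,\sigma'\in S$ (finite) yields the claim, with the leading-order coefficient expressible in terms of $q_A$, $q_B$, $\lambda_A$, $\lambda_B$, and $\mu_B$.

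The main technical point is ensuring the log ratio remains well-defined in the off-diagonal case: if $q_A(\sigma,\sigma')=0$ while $q_B(\sigma,\sigma')>0$ for some pair, then $\Pb^A$ would be of higher order in $\Dt$ than $\Pb^B$ on that pair and the ratio would blow up, contradicting the assumption that RER is finite. To avoid this, one must invoke the absolute continuity hypothesis $\Pb^B\ll\Pb^A$ mentioned after~\eqref{eq:RE}, which in the present context is equivalent to requiring that $q_B(\sigma,\sigma')=0$ whenever $q_A(\sigma,\sigma')=0$ (and more generally agreement of the $L_A^k,L_B^k$ supports up to the order where $\Pb^A$ first becomes positive). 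Given that, the estimates above are uniform in $\sigma,\sigma'$, and since $S$ is finite (or under suitable summability assumptions), the $O(\Dt)$ bound follows.
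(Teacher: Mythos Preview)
Your argument is correct and takes a genuinely different route from the paper. The paper defers the proof to the supplementary material but indicates it follows the technique of Theorem~\ref{th:spparks_th}, namely the $\arctanh$ expansion $\log x = 2\sum_{k\geq 0}\frac{1}{2k+1}\bigl(\frac{x-1}{x+1}\bigr)^{2k+1}$ applied to $x = \Pb^B/\Pb^A$, followed by order-by-order bookkeeping of the resulting fractions $(\Pb^B-\Pb^A)/(\Pb^B+\Pb^A)$. You instead expand the kernels directly via Lemma~\ref{lem:semigroup_expansion} and treat the diagonal and off-diagonal contributions separately with ordinary Taylor expansions of the logarithm. For the bare $O(\Dt)$ statement your approach is cleaner: in the $\arctanh$ route every term of the series contributes at order $\Dt$ on the off-diagonal (exactly as in~\eqref{eq:proof:F_expansion}), and one must resum to recover what you write down in one line as $q_B\Dt\,\log(q_B/q_A)+o(\Dt)$. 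The payoff of the paper's expansion comes later, when one wants the full asymptotic series for higher-order splittings (Theorems~\ref{th:spparks_th} and~\ref{th:main_result}), where the $\arctanh$ form organizes the cancellations more transparently.

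One small correction: your identification of the needed hypothesis with absolute continuity is not quite right. On a finite irreducible state space, $\Pb^A(\si,\si')>0$ for every pair and every $\Dt>0$, so $\Pb^B\ll\Pb^A$ holds automatically and says nothing about the rates. What you actually need---and what you correctly state in your parenthetical---is that $q_B(\si,\si')>0$ implies $q_A(\si,\si')>0$ (more generally, that the first nonvanishing order of $\Pb^A(\si,\si')$ in $\Dt$ is no larger than that of $\Pb^B(\si,\si')$). Without this, a pair with $q_B>0=q_A$ produces a contribution of order $\Dt|\log\Dt|$, so the lemma as stated does require such an implicit compatibility assumption; this is not a flaw in your argument but a looseness in the lemma's hypotheses that you have correctly flagged.
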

\begin{proof}
\ouredit{Proof follows the ideas in Theorem~\ref{th:spparks_th}. The argument is provided in the supplementary material.} 
\end{proof}

\begin{remark}
Using Lemma~\ref{prop:rer_scaling}, we can readily see that given \ouredit{an operator} splitting scheme \firstRef{$\Pb$ that approximates the exact $\Po$, we expect a scaling at least of the type
$H(\Pb|\Po)=O(\Dt)$. To correct for the $\Dt$ scaling}, we will instead work with a $\Dt$-normalized RER. \firstRef{That is, we re-define the RER as:}
\begin{align}
\label{eq:RER-normalized}
H(\Pb|\Po)\firstRef{:=}\frac{1}{\Dt}\sum_{\si,\si'}\mb(\si)\Pb(\si,\si')\log\left(\frac{\Pb(\si,\si')}{\Po(\si,\si')}\right).
\end{align}
\end{remark}

\ouredit{We wish to use the RER (Equation~\eqref{eq:RER-normalized}) to study the long-time loss of information between $\Pb$ and $\Po$. However, in the case of Parallel KMC, those are difficult to calculate explicitly, hence we turn to asymptotic expansions instead. We will see that the terms in those expansions depend on the transition rates and, under suitable ergodic assumptions, can be estimated during the simulation. }

\section{RER analysis for Parallel KMC}
\label{sec:IPS-PKMC}
We will now study an example from a class of interacting particle systems, limiting our discussion to the Lie and Strang splittings. Given two states $\si,\si' \in S$ and $x$ lattice site, $\si(x)\in \{0,1\}$, we have that the transition rates $q$ are
\begin{align}
q(\si,\si')=\begin{cases}
q(\si,\si^x)>0,& \si'=\si^x,\\
0,& \text{else.}
\end{cases}
\label{eq:rate_assumption}
\end{align}
The rates in Equation~\eqref{eq:rate_assumption} provide a particular example of an adsorption/desorption system. Other mechanisms can be incorporated into~\eqref{eq:rate_assumption} such as diffusion, reactions with multiple components or with particles that have many degrees of freedom~\cite{Arampatzis:2012}.

Given a lattice $\Lambda$ with $N$ sites, we are interested in simulating the process $\si_t=\{\si_t(x):x\in \Lambda\}$ in parallel with an \ouredit{operator} splitting method, so we apply the ideas in Section~\ref{sect:ParallelLatticeKMC} to that end. We first decompose the lattice into non-overlapping sub-lattices \ouredit{(see Figure~\ref{fig:lattice_decomp})} and this induces a decomposition of the generator into new generators $L_1, L_2$ as in~\eqref{eq:splitting_to_groups}. Then, for any $T>0$, the \ouredit{adsorption/desorption} system can be simulated in $[0,T]$ using the \ouredit{parallel KMC algorithm}. From the short-time error analysis, we can control the error by computing the commutator, $C(\cdot,\cdot)$, and the order of the local error that corresponds to the \ouredit{operator splitting scheme} we use. For example, we know that for the Lie splitting that order is $p=2$ and $C(\si,\si')=[L_1,L_2]\de_{\si'}(\si)/2$ (see Lemma~\ref{lem:local_order_of_error} and \ouredit{Equation~\eqref{eq:comm-lemma-Lie}}). By using the properties of the generators $L_1, L_2$ along with our assumption in \eqref{eq:rate_assumption}, we can show that 
\begin{equation}
\begin{aligned}
C(\si, \si')=\ouredit{[L_1,L_2]\des/2}= &\frac{1}{2}\sum_{x,y\in \Lambda} f_1(x,y;\si)\delta_{\si'}(\si^{x,y})- f_2(x,y;\si)\delta_{\si'}(\si^x)\\
&-\frac{1}{2}\sum_{x,y\in \Lambda} f_3(x,y;\si)\delta_{\si'}(\si^y),
\end{aligned}
\label{eq:comm_Lie}
\end{equation}
where $f_1,f_2$ and $f_3$ only depend on the transition rates $q$. We remind here that $\si^{x,y}$ stands for the resulting state $\si'$ after a spin-flip of an initial state $\si$ at lattice sites $x,y, x\neq y$. A full description of the above formula along with proof can be found in the supplementary material. 

\begin{remark}
\label{rem:comm_prop}
 Formula \eqref{eq:comm_Lie} for the Lie commutator has two important properties. First, it is computable for any pair $(\si,\si')\in S\times S$ \firstRef{as it only depends on the transition rates $q$}. Second, it is surely equal to zero if $\si'\neq \si^{x,y}$ and $\si'\neq \si^x$ for all $x,y\in \Lambda, x\neq y$, due to the $\de_{\si'}$ appearing in the different sums. \firstRef{We will also see that the sum in \eqref{eq:comm_Lie} needs only to be evaluated for the neighboring lattice sites $x,y$ that are not both in the same group. For instance, in Figure~\ref{fig:lattice_decomp}, we would only need to evaluate the sum over the green boundary regions of every sub-lattice, which makes the computation of the commutator much simpler (see Remark~\ref{rem:scal_and_computation} for a complexity analysis).  Those properties hold for commutators of other operator splitting schemes too, see~\cite{Arampatzis:2014} and Section~\ref{sect:conn}. }
\end{remark}

\ouredit{To study the asymptotic behavior of the RER, we will need to quantify the dependence of various combinations of $\Po$ and $\Pb$ to $\Dt$. To this end, we use the following facts, both of which stem from Lemma~\ref{lem:local_order_of_error}.
\begin{align}
	\Po(\si,\si')-\Pb(\si,\si')&=C(\si,\si')\Dt^p+o(\Dt^p)\label{eq:PminusQ},\\
	\Po(\si,\si')+\Pb(\si,\si')&=2\de_{\si'}(\si)+2q(\si,\si')\Dt+o(\Dt)\label{eq:PplusQ}\\
	&=2\Pb(\si,\si')+C(\si,\si')\Dt^p+o(\Dt^p)\label{eq:PplusQ2}.
\end{align}}
We are now able to write an asymptotic result for RER for \ouredit{the Lie and Strang operator splittings in parallel KMC under the assumption in Relation~\eqref{eq:rate_assumption}.}
\begin{theorem}
\label{th:spparks_th}
Let $\Dt\in (0,1)$ and $\si_{n\Dt}$ on the lattice $\Lambda$ with transition probability $\Po(\si,\si')=e^{L\Dt}\delta_{\si'}(\si)$ for $\si,\si'\in S$. Then, let $L_1+L_2$ be a splitting of $L$ based on a decomposition of the lattice $\Lambda$. Assuming that property \eqref{eq:rate_assumption} holds for the rates, then if there exists a state $\si\in S$ and lattice sites \ouredit{distinct} $x,y$ such that the Lie commutator $C(\si, \si^{x,y})\neq 0$, we have that
\begin{align}
H(\Pbl|\Po)=O(\Dt^1) \text{ (Lie)}.
\end{align}
Similarly, if there exists a state $\si\in S$ and \ouredit{distinct} lattice sites $x,y,z$ such that $C(\si, \si^{x,y,z})\neq 0$, 
\begin{align}
H(\Pbs|\Po)=O(\Dt^2) \text{ (Strang)}. 
\end{align}
\end{theorem}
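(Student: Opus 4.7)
The plan is to expand the RER $H(\Pb|\Po)$ directly in powers of $\Dt$, classifying ordered pairs $(\si,\si')$ by the Hamming distance $m$ between them (the number of lattice sites at which the spins differ). The structural fact enforced by the rate assumption \eqref{eq:rate_assumption} is that $L$ and each $L_i$ flip exactly one site per application, so both $L^k\de_{\si'}(\si)$ and $L_Q^k\de_{\si'}(\si)$ vanish whenever $k<m$. Combined with \eqref{eq:semigroup_expansion} and \eqref{eq:gen_split_power_exp}, this gives
\begin{equation*}
\Po(\si,\si')=\frac{L^m\de_{\si'}(\si)}{m!}\Dt^m+O(\Dt^{m+1}),\qquad
\Pb(\si,\si')=\frac{L_Q^m\de_{\si'}(\si)}{m!}\Dt^m+O(\Dt^{m+1}),
\end{equation*}
with leading coefficients coinciding whenever $m<p$ by Lemma~\ref{lem:local_order_of_error}.

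I would then insert these expansions into the definition \eqref{eq:RER-normalized} and split the double sum into four regimes according to $m$: diagonal ($m=0$), subcritical off-diagonal ($1\le m<p$), critical ($m=p$), and supercritical ($m>p$). In each regime the log ratio is controlled via the Taylor expansion of $\log(1+u)$. For $m<p$ the matching leading coefficients give $\Po-\Pb=O(\Dt^p)$ while $\Po\asymp\Dt^m$, yielding $\log(\Pb/\Po)=O(\Dt^{p-m})$ and hence $\Pb\log(\Pb/\Po)=O(\Dt^p)$. For $m=p$ the leading coefficients may genuinely differ, so $\log(\Pb/\Po)=O(1)$ but $\Pb\log(\Pb/\Po)=O(\Dt^p)$ again. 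For $m>p$, both factors are $O(\Dt^m)$, giving a smaller contribution $O(\Dt^m)$. The diagonal is handled the same way, using $\Po(\si,\si),\Pb(\si,\si)=1-\lambda(\si)\Dt+O(\Dt^2)$ and $\Po(\si,\si)-\Pb(\si,\si)=O(\Dt^p)$. Summing over the finitely many relevant pairs, weighting by $\mb(\si)$, and dividing by $\Dt$ yields $H(\Pb|\Po)=O(\Dt^{p-1})$, which specializes to $O(\Dt)$ for Lie ($p=2$) and $O(\Dt^2)$ for Strang ($p=3$).

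The hypothesis on the commutator is used to ensure that this bound is genuinely attained and not of higher order. For Lie, the critical regime $m=p=2$ drives the leading order: the existence of $\si,x,y$ with $C(\si,\si^{x,y})\ne 0$ is equivalent to $L^2\de_{\si^{x,y}}(\si)\ne L_Q^2\de_{\si^{x,y}}(\si)$, so the ratio $\Pb/\Po$ tends to a constant different from $1$ at that pair, making the corresponding term in the RER genuinely $\Theta(\Dt)$. The Strang case is identical with $p=3$ and a three-site flip $\si^{x,y,z}$.

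The main technical obstacle is the careful bookkeeping in the subcritical regime $1\le m<p$, where the $O(\Dt^{p-m})$ estimate on $\log(\Pb/\Po)$ relies essentially on the cancellation of matching leading coefficients in $\Po-\Pb$. This requires showing that $\Po(\si,\si')$ is uniformly bounded below by a positive multiple of $\Dt^m$ on the (finite) support of the relevant transitions, which follows from the positivity of the rates $q(\si,\si^x)$ together with finiteness of $S$. One also needs $\Po(\si,\si')>0$ for all pairs to make the logarithm well defined; this is immediate from ergodicity of the CTMC and boundedness of $L$ on the finite state space $S$.
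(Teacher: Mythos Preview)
Your approach is correct and is genuinely different from the paper's route. The paper expands the logarithm via $\log x = 2\,\mathrm{atanh}((x-1)/(x+1))$, isolates the first term of that series, and then uses the identity $\sum_{\si'}(\Po-\Pb)=0$ to kill the apparent leading contribution; what remains are two pieces $G$ and $J$ that are analyzed case-by-case on the pairs $\si'=\si^{x,y}$ (respectively $\si^{x,y,z}$). Your argument instead stratifies $S\times S$ by Hamming distance $m$ and controls $\Pb\log(\Pb/\Po)$ directly through $\Po-\Pb=O(\Dt^p)$ together with $\Po,\Pb=\Theta(\Dt^m)$. This is more elementary, does not require the arctanh trick or the explicit cancellation, and is in fact exactly the mechanism the paper later abstracts in Theorem~\ref{th:main_result} via the graph distance $d(\si,\si')$ (for spin-flip rates~\eqref{eq:rate_assumption}, $d$ \emph{is} the Hamming distance). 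The price you pay is that the paper's calculation naturally produces closed forms for the leading coefficients (Appendix~A), which feed the a~posteriori estimators of Section~\ref{sec:a-posteriori-RER}; your argument gives the order cleanly but not those formulas.

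Two small points to tighten. First, for $m\ge p$ your claim $\log(\Pb/\Po)=O(1)$ implicitly needs $L_Q^m\de_{\si'}(\si)>0$; for Lie this holds because the unique intermediate state $\si''$ agreeing with $\si'$ on $G_1$ and with $\si$ on $G_2$ realizes the minimal path, but you should say so (if $L_Q^m=0$ the term is $O(\Dt^{m+1}|\log\Dt|)$, still negligible). Second, for sharpness, ``the corresponding term is $\Theta(\Dt)$'' does not by itself bound the sum from below, since individual summands $\Pb\log(\Pb/\Po)$ can be negative. The clean fix is to add zero as $\sum_{\si'}(\Po-\Pb)$ and rewrite the inner sum as $\sum_{\si'}\Po\,\phi(\Pb/\Po)$ with $\phi(t)=t\log t - t + 1\ge 0$; then the single critical pair with $C\ne 0$ gives a strictly positive $\Theta(\Dt^{p})$ contribution and all others are nonnegative. (The paper is equally informal on this point.)
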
 

\begin{proof}
\ouredit{We will first show the result for the Lie case and then note the differences in the proof for the Strang case. Thus, we denote $\Pbl$ by $\Pb$, $\mbl$ by $\mb$, and consider a $\Dt\in (0,1)$.  As we wish to construct an asymptotic expansion for the RER (Equation~\eqref{eq:RER-normalized}), we first need to expand the logarithm. Given a positive $x$} and by the definition of $tanh^{-1}$,
\begin{align}
\log(x)=2 \arctanh\left(\frac{x-1}{x+1}\right)=2\sum_{k=0}^{\infty}\frac{1}{2k+1}\left(\frac{x-1}{x+1}\right)^{2k+1}.
\label{eq:i_arctanh_def}
\end{align}
This expansion of the logarithm converges for every $x>0$, as can be seen by applying the root convergence test. \ouredit{Thus, expanding the logarithm part of the RER, we get:} 
\begin{align}
\Dt\cdot H(\Pb|\Po)=& -2\sum_{\si,\si'}\mu_{Q}(\si)\Pb(\si,\si')\frac{\Po(\si,\si')-\Pb(\si,\si')}{\Pb(\si,\si')+\Po(\si,\si')}\label{eq:RER-expand}\\
&+2\sum_{\si,\si'}\mu_{Q}(\si)J(\Dt;\si,\si'),\nonumber\\
J(\Dt;\si,\si')&:=\Pb(\si,\si')\sum_{k=1}^{\infty}\frac{1}{2k+1}\left(\frac{\Pb(\si,\si')-\Po(\si,\si')}{\Pb(\si,\si')+\Po(\si,\si')}\right)^{2k+1} .\label{eq:naive:F}
\end{align}

 \ouredit{We will study the asymptotic behavior of both parts of the RER in Equation \eqref{eq:RER-expand}. First, applying Equation~\eqref{eq:PplusQ} to the denominator of the fraction in  \eqref{eq:RER-expand} and carrying out the simplifications, we have}
\begin{equation}
	\begin{aligned}
		\Dt\cdot H(\Pb|\Po)=&-2\sum_{\si,\si'}\mb(\si)\left(\Po(\si,\si')-\Pb(\si,\si')+G(\Dt;\si,\si')\right)\\
		&+2\sum_{\si,\si'}\mb(\si)J(\Dt;\si,\si')\label{eq:proof4}.
	\end{aligned}
\end{equation} 
Now, \ouredit{since $\Pb,\Po$ are transition probabilities, $\sum_{\si'\in S}\Po(\si,\si')-\Pb(\si,\si')=0$ for all $\si\in S$, and thus the corresponding part of Equation~\eqref{eq:proof4} is zero.} To progress, we need to study the dependence on $\Dt$ of $J,G$. \ouredit{First, for $G$ in Equation~\eqref{eq:proof4},}
\begin{align}
G(\Dt;\si, \si')=\frac{(\Po(\si,\si')-\Pb(\si,\si'))C(\si,\si')\Dt^2}{\left(2\Pb(\si,\si')+\Dt^2C(\si,\si')+o(\Dt^2)\right)}+o(\Dt^{2}).
\label{eq:proof6}
\end{align}
\ouredit{To expose the dependence of the numerator of \eqref{eq:proof6} to $\Dt$, we use \eqref{eq:PminusQ} to get}
\begin{align}
G(\Dt;\si, \si')=\frac{(C(\si,\si'))^2}{2\Pb(\si,\si')+\Dt^2C(\si,\si')+o(\Dt^2)}\Dt^{4}+o(\Dt^{2}).
\label{eq:proof7}
\end{align} 
We wish to show that $G(\Dt;\si,\si')=O(\Dt^2)$. From the explicit form of the commutator in \eqref{eq:comm_Lie} and Remark \ref{rem:comm_prop}, we can see that we only need to study $G$ in the cases that $\si'=\si^x$ or $\si'=\si^{x,y}$, given a state $\si$ and lattice sites $x,y$, since otherwise $C(\si,\si')=0$. \ouredit{Let us consider $\si'=\si^{x,y}$. Since the order of the local error is equal to two,} from expansion \eqref{eq:gen_split_power_exp} and the fact that $L_Q[\de_{\si^{x,y}}](\si)=L[\de_{\si^{x,y}}](\si)$ and $L[\de_{\si^{x,y}}]=q(\si,\si^{x,y})=0$ \ouredit{(see the property in \eqref{eq:rate_assumption})}, we have
\begin{align}
\Pb(\si,\si^{x,y})=\frac{\Dt^2}{2}L^2_{Q}[\de_{\si'}](\si)+o(\Dt^2).
\label{eq:proof8}
\end{align}
Thus, applying~\eqref{eq:proof8} \ouredit{to the denominator} of~\eqref{eq:proof7}, 
\begin{align}
G(\Dt;\si, \si^{x,y})&=\frac{(C(\si,\si^{x,y}))^2}{\Dt^2\cdot (L^2_{Q}[\de_{\si^{x,y}}](\si)+C(\si,\si^{x,y}))+o(\Dt^2)}\Dt^{4}+o(\Dt^{2})\nonumber\\
&=\frac{(C(\si,\si^{x,y}))^2}{L^2_{Q}[\de_{\si^{x,y}}](\si)+C(\si,\si^{x,y})}\Dt^{2}+o(\Dt^{2})
\label{eq:proof9}
\end{align} 
By similar calculations, we can show that $G(\si,\si^x)=O(\Dt^3)$, if $C(\si,\si^x)\neq 0$ for that $x\in \Lambda$. Regardless, this would be a lower order, since $\Dt< 1$. Thus, $G(\Dt;\si,\si')$ \ouredit{is indeed of order} $\Dt^2$. Next, we will account for $J(\Dt;\si,\si')$. If $\si'=\si^{x,y}$, then 
\begin{align}
J(\Dt;\si,\si^{x,y})=\Pb(\si,\si^{x,y})\sum_{k=1}^{\infty}\frac{1}{2k+1}\left(\frac{\Pb(\si,\si^{x,y})-\Po(\si,\si^{x,y})}{\Pb(\si,\si^{x,y})+\Po(\si,\si^{x,y})}\right)^{2k+1}.
\label{eq:proof:F_expansion}
\end{align}
\ouredit{Because $\Pb(\si,\si^{x,y})=O(\Dt^2)$ and $\Pb(\si,\si^{x,y})\pm\Po(\si,\si^{x,y})=O(\Dt^2)$, we get}
$$
J(\Dt;\si,\si^{x,y})=O(\Dt^2),
$$
since, for $\si'=\si^x$, $J(\Dt;\si,\si^x)=O(\Dt^4)$ and this is a lower order when $\Dt<1$. Therefore, $H(\Pb|\Po)=O(\Dt^1)$. Note that all of the terms of the series in \eqref{eq:proof:F_expansion} contribute a term of order $\Dt^2$, so the coefficient of $\Dt^2$ in the asymptotic expansion of the RER will be a result of the summation of all those terms.

Finally, we discuss the differences in our argument for the proof of the Strang case. First, the order of the local error for Strang is $p=3$, so every time we use formula \eqref{eq:PminusQ} in the proof, we would introduce a term of order $\Dt^3$ instead of $\Dt^2$. Then, using an expression for $C(\cdot, \cdot)$ similar to \eqref{eq:comm_Lie} but for the Strang case, we would show that 
$$J(\Dt;\si,\si^{x,y,z})=O(\Dt^3)=G(\Dt;\si,\si^{x,y,z})$$
 for $x,y,z\in \Lambda$ and $x\neq y\neq z$. This would then give the result for Strang.
\end{proof}

\subsection{Building biased a-posteriori estimators for the RER}
\label{sec:a-posteriori-RER}
\ouredit{Theorem~\ref{th:spparks_th} shows that the long-time accuracy with respect to the RER of the two operator spllitting schemes, Lie and Strang, scales with $\Dt$ in the same way the global error does.  However, it also exposes the first terms in the asymptotic expansion of the RER for Lie and Strang. Essentially, }
\begin{align}
H(\Pbl|\Po)&=A\Dt+o(\Dt)\label{eq:RER-A}\\
H(\Pbs|\Po)&=B\Dt^2+o(\Dt^2)
\end{align}
\ouredit{where $A,B$ are the corresponding highest order RER coefficients. Those have an explicit form that depends on the system one wishes to simulate and the commutator $C(\si,\si')$ corresponding to the scheme. We focus to the case of the Lie operator splitting, though similar comments can also be made for Strang. For systems with transition rates satisfying the property in~\eqref{eq:rate_assumption}, the highest-order coefficient $A$ appearing in~\eqref{eq:RER-A} has the form:
\begin{align}
\label{eq:RER-Lie-top-order}
A=\sum_{\si}\mbl(\si)\sum_{x,y\in \Lambda} C_{\Lie}(\si,\si^{x,y})F_{\Lie}(\si,\si^{x,y}),
\end{align}
where $C_{\Lie}$ is the Lie commutator (see Equation~\eqref{eq:comm-lemma-Lie}) and $F_{\Lie}$ is a quantity that depends on the splitting (see Equations~\eqref{eq:exact_a} and~\eqref{eq:exact_b} in appendix for examples on how this F can look for different splittings).} \firstRef{Both $C$ and $F$ can be expressed in terms of the transition rates of the process $q$, i.e. they are computable for any state $\si$ and $x,y\in \Lambda$. }\ouredit{Therefore, $A$ in~\eqref{eq:RER-Lie-top-order} can be estimated via an ergodic average when simulating with the Lie scheme and hence, for small $\Dt$, $H(\Pbl|\Po)\simeq A\Dt$.}

\ouredit{At first glance, computing coefficient~\eqref{eq:RER-Lie-top-order} involves work that scales with the size of the lattice}. \ouredit{However, it was shown in Lemma~5.15 of~\cite{Arampatzis:2014} that the commutator only depends on the boundary regions between sub-lattices (see Figure~\ref{fig:lattice_decomp}). We will continue this discussion in Section~\ref{sec:ising_example}, where we consider an adsorption-desorption system.} 
\ouredit{We will also see that, apart from a comparison of the schemes in terms of the long-time loss of information, the estimators of RER can also be of use in tuning parameters of the scheme ($\Dt$, domain decomposition, etc.). We will then consider the behavior of the RER when simulating other systems in Section~\ref{sect:conn}.}

\section{Error vs.\ communication and time-step selection}
\label{sec:ising_example}
In this section, we explore the balance between numerical error and processor communication in Parallel KMC, 
in the context of a specific example.
Let us assume a bounded two-dimensional lattice, $\Lambda\subset \mathbb{Z}^2$ with $100\times 100$ sites. At each site $x$, we have a spin variable, $\si(x)\in \Sigma=\{0,1\}$, with $\si(x)=0$ denoting an empty site and $\si(x)=1$ an occupied one. Our model in this case is going to be an \textit{adsorption-desorption} one, although the analysis would similarly apply for other mechanisms (diffusions, reactions, etc., see~\cite{Arampatzis:2012} for more details). The transition rates we will use correspond to spin-flip Arrhenius dynamics. Given a lattice site $x$, we may also define the nearest-neighbor set $\Omega_{x}=\{z\in \Lambda:|z-x|=1\}$. The transitions rates are then
\begin{align}
q(\si,\si^x)&=q(x,\si)=c_1(1-\si(x))+c_2\si(x)e^{-\beta U(x)},\label{eq:ising_q}\\
U(x)&=J_0\sum_{y\in \Omega_{x}}\si(y)+h,
\end{align}
where $c_1,c_2,-\beta, J_0$ and $h$ are constants that can be tuned to generate different dynamics. We remind that $\si^x$ denotes the result of a spin-flip at lattice position $x$ if we start from state $\si$. Note that the transition rates \eqref{eq:ising_q} have the property \eqref{eq:rate_assumption}. \ouredit{When considering a jump from $\si$ to $\si^x$, $q$ only depends in the spin values of the sites close to $x$ (through $U(x)$).} Since transitions are localized, we can thus employ a geometrical decomposition of the lattice, as described in Section \ref{sect:FSKMC}, and simulate the system in parallel. To accomplish this, we used Sandia Labs' SPPARKS code, a Kinetic Monte Carlo simulator~\cite{spparks}.

\ouredit{From Table~\ref{tab:comm_cost} and Remark~\ref{rem:scal_and_computation}, we can see that the cost of computing quantities that depend on the commutator scales as $O(N)$ for an $N\times N$ lattice. As the highest order coefficients of the RER also depend on the commutator (see Section~\ref{sec:a-posteriori-RER}), those also scale as $O(N)$. We can take advantage of the knowledge of the scaling by defining a per-particle RER (pp-RER)}. That is, 
\begin{align}
\label{eq:pp-RER}
\Hpp(\Pb|\Po)\ouredit{:=}\frac{1}{N}H(\Pb|\Po).
\end{align}
This way, setting a tolerance for the pp-RER will have the same meaning across different system sizes. \ouredit{We confirmed that $O(N)$ is the right scaling of the per-particle RER with respect to system size via simulation, as we saw that for increasing $N$, $\Hpp(\Pb|\Po)\simeq o(1)$.}

To estimate the top-order coefficients of the pp-RER expansion, we simulated the system until convergence to the stationary distribution was established. After that, every sample simulated by SPPARKS~\cite{spparks} was used to calculate the estimates. Note that, in this case, we show an over-estimate of $B$, so results for the Strang splitting will be even better than the ones presented in Figure~\ref{fig:comparison_2d_ising}. It is possible to get an estimator that converges to the exact value of $B$ by adding all of the positive terms in $L_S^3[\de_\si'](\si)$ to the denominator of~\eqref{eq:MB}. 
\begin{figure}[h]
\centering
\includegraphics[width=\figpercent\textwidth]{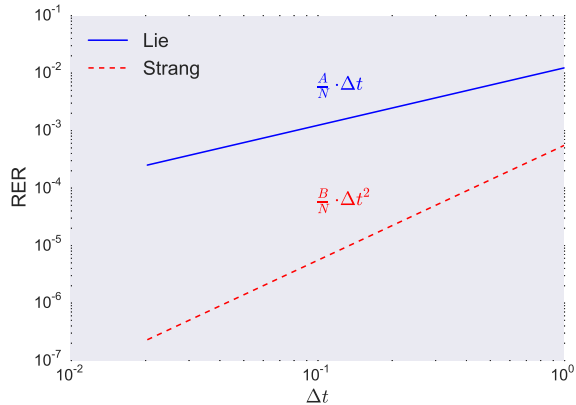}
\caption{Logarithmic scale - Comparison between $\Dt$ and the estimate of the per-particle RER for Lie \& Strang. Estimates for the constants $A,B$ come from the simulation of a 2D Ising model on a $100\times 100$ lattice with final time $T=1000$. Simulation was done in parallel with SPPARKS. }
\label{fig:comparison_2d_ising}
\end{figure}
Figure~\ref{fig:comparison_2d_ising} illustrates the difference in long-time accuracy between the two splittings. Since this is a logarithmic plot, most of the difference is made by Strang having a different order than Lie.

\begin{remark}[On the efficiency of computing \ouredit{the highest order coefficients of the expansion of the RER for the Lie and Strang operator splittings.}]
\label{rem:scal_and_computation}

In the case of a checkerboard decomposition of the lattice (see Figure \ref{fig:lattice_decomp}), we can calculate in exactly how many sites we need to evaluate the rates in order to calculate the commutator. However, for our purposes, upper bounds will be more appropriate. Table~\ref{tab:comm_cost} offers a comparison of those bounds when we decompose a $N\times N$ lattice into $m^2$ sub-lattices, assuming nearest neighbor interactions. Notice that the cost is larger for Strang due to the complexity of the corresponding commutator.
\begin{table}[H]
\centering
	\begin{tabular}{|c|c|c|}
		\hline
		& Lie& Strang\\
		\hline
		Upper bound of the commutator cost& & \\(normalized by number of sites, $N^2$)& $2(m+1)/N$& $6(m+1)/N$\\
		\hline
	\end{tabular}
	\caption{Upper bounds \ouredit{(normalized by lattice size)} on the number of lattice sites we need to evaluate the transition rates at in order to calculate the commutator for each \ouredit{operator} splitting. Assuming that a checkerboard decomposition \ouredit{into $m^2$ sub-lattices} of an $N\times N$ lattice is used, as in Figure \ref{fig:lattice_decomp}. The commutator also encodes the cost of communication between the processes. As $N$ grows, the cost of communication is smaller, as the processes spend more time simulating on the sub-lattices than updating each others boundaries.}
\label{tab:comm_cost}
\end{table}
\end{remark}

On a more practical note, a user of a splitting scheme may instead like to see the flipped relationship. That is, given a fixed tolerance, what is the maximum time window during which the simulation can run asynchronously? If we interpret tolerance as a fixed value of $\Hpp(\Pb|\Po)$ during the simulation, then the relationship with $\Dt$ is the one in Figure~\ref{fig:tol_vs_dt}.
\begin{figure}[h]
\centering
\includegraphics[width=\figpercent\textwidth]{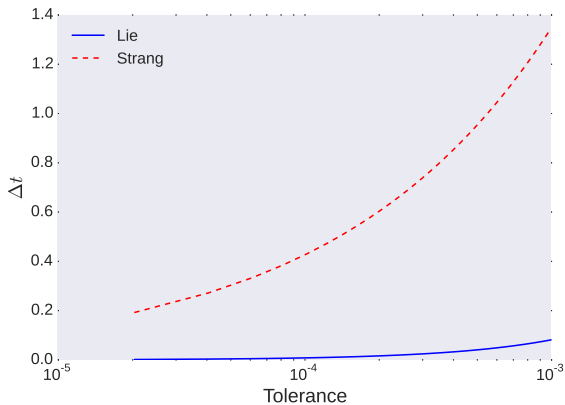}
\caption{Comparison between tolerance and $\Dt$. The difference in order of the pp-RER between the two splittings allows for a larger splitting time-step $\Dt$ given a fixed tolerance. This is similar to the behavior of the error in~\cite{Arampatzis:2014}, although the RER allows us to make this statement for $T\gg 1$.}
\label{fig:tol_vs_dt}
\end{figure}
There we can see that if our error tolerance with respect to the pp-RER is $10^{-3}$, then any $\Dt$ smaller than 0.7 works for the Strang splitting. To get within the same tolerance with Lie, $\Dt$ has to be less than $0.02$, a substantially small step-size for parallel computations. As is expected, a smaller step-size comes with larger communication cost and thus a longer computation for the same tolerance. This can be seen in Figure~\ref{fig:timing_comp}. 

\begin{remark}
Figures~\ref{fig:tol_vs_dt} and~\ref{fig:timing_comp} illustrate the very practical consequences of the theory. Interest in highly accurate splitting schemes in PL-KMC stems from a tolerance-versus-communication point of view. A user of such a scheme would like for it to be as accurate as possible, therefore the step size, $\Dt$, should be relatively small. However, for the scheme to be efficient, $\Dt$ should be large enough for every processor to have a substantial amount of work to do before communications are in order. A good balance can be reached in-between and a scheme that is more accurate allows for a larger $\Dt$ while holding the same error tolerance. Given that the RER captures long-time behavior, this is an important comparison between the schemes.
\end{remark}

\begin{figure}[h]
\centering
\includegraphics[width=\figpercent\textwidth]{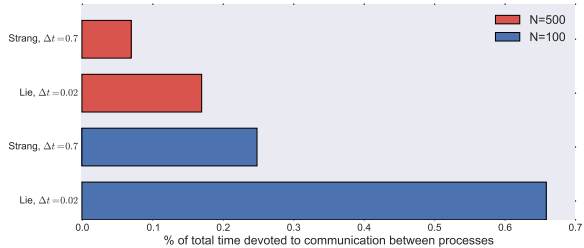}
\caption{Percentage of time each scheme devotes to communication in a fixed time interval, $[0,T]$, for a square $N\times N$ lattice when simulating an Ising-type system, using four processes and for $T=3000$. Note that for the $\Dt$ considered, the pp-RER tolerance is $10^{-3}$ for both schemes. Due to the considerably smaller step size of the Lie scheme, a larger chunk of time is devoted to communication. This is more apparent in the case of a moderately small lattice, $N=100$, where the time spent updating the other processes is over $60\%$ of total time. Communication cost is more severe when $N$ is smaller. By Remark \ref{rem:scal_and_computation}, as $N$ grows, communication should take less of the total time, as the processes spent more time simulating than updating their boundaries. }
\label{fig:timing_comp} 
\end{figure}

\subsection{The per-particle RER as an efficient diagnostic quantity for parallel KMC}
\secondRef{The discussion above about the per-particle RER, \eqref{eq:pp-RER} suggests the use of these estimates as efficient diagnostic quantities for comparing schemes. As discussed in the previous section, we can infer the scaling of the top-order coefficient of the RER by the properties of the commutator. Consequently, we can ``normalize'' the RER (as in~\eqref{eq:pp-RER}) by that scaling to derive a similarity measure that does not depend on system size. This is significant as it allows practitioners to compare schemes and tune parameters ($\Dt$, domain decomposition, etc.) on a system of smaller size and thus avoid further slowing down of the target simulation, which is crucial for complicated systems.
Overall, our approach can be viewed as  a diagnostic tool that allows to  compare different parallelization
schemes based on operator splitting.
}

\section{Some connections with Model Selection and Information Criteria}
\label{sec:info-crit}
\ouredit{The interacting particle system application considered in Section~\ref{sec:ising_example}} allows us to look at the RER via a statistical lens. The goal is to compare two models, $\Pb^1,\Pb^2$, of the actual distribution $\Po$ by utilizing simulated data. From this standpoint, our methodology is nothing more than model selection. There is an abundance of literature towards tackling the comparison of different models, given a sufficiently large amount of data. A prominent example is the use of information criteria in the Model Selection literature, like Akaike~\cite{Akaike1} and Bayesian~\cite{Bayes1}. Those provide estimates for the information lost \ouredit{compared to a given data set} by using one approximate model instead of another, without requiring knowledge of the true model. 

The approach in this work is very similar in nature.  As stated before, motivated by Theorem \ref{th:spparks_th}, we can express the RER in each case as 
\begin{align*}
H(\Pb^i|\Po)=A_i\Dt^{p_i}+o(\Dt^{p_i}), p_i\geq 1, i\in \{1,2\}.
\end{align*}
For instance, in the case of the Lie splitting, $A_1=A$ as defined in \eqref{eq:exact_a}, $p_1=2$ and for Strang $A_2=B, p_2=3$, as defined in \eqref{eq:exact_b}. Given simulated data and for a small fixed $\Dt$, we can  estimate the \ouredit{ coefficients $A_i$}. Comparison of the schemes can now be done through
\begin{align}
H(\Pb^1|\Po)-H(\Pb^2|\Po)=A_1\Dt^{p_1}-A_2\Dt^{p_2}+o\left(\Dt^{\min(p_1,p_2)}\right).
\label{eq:rer_info_crit}
\end{align}
The difference $A_1\Dt^{p_1}-A_2\Dt^{p_2}$ shares the properties of the information criteria previously mentioned while also introducing some new ones, namely: 

\begin{enumerate}
\item It is a computationally tractable quantity.  
\item Compares the schemes in terms of long-time information loss (through $p_1,p_2$). 
\item Takes into account communication cost of each scheme (through $A_1, A_2$ and associated commutators).\label{item:comm_cost}
\end{enumerate}
Thus, as an information criterion, RER differences like in Equation~\eqref{eq:rer_info_crit} offer a different perspective through which to pick a splitting scheme over another. \ouredit{A new element in our approach, compared to the earlier vast literature in Information Criteria, is the use of RER instead of the standard relative entropy. Using RER  allows us to compare stochastic dynamics models  and in a data context, correlated time series.}

\section{Generalizations, Connectivity, and Relative Entropy Rate}
\label{sect:conn}
\ouredit{
Up to this point, we have analyzed the RER with respect to the leading order in $\Dt$ for the case of a stochastic particle system (see Theorem~\ref{th:spparks_th}). In this section, we study the RER in a more general setting and illustrate that it captures more details about the system and the scheme used than one would expect. We will also see how the order of the RER can change depending on those details, resulting in some cases to schemes of higher accuracy. 
}
\begin{definition}[Restriction of a generator]
\label{def:restriction}
Let us have set $A$ with $A \subset S\times S$ and $L$ be an infinitesimal generator of a Markov process with associated transition rates $q$. Then, the restriction $L|_{A}$ of $ L $ is defined as
\begin{equation}
\label{eq:restriction_definition}
L|_{A}[f](\si)=\sum_{\si'\in S}q_A(\si,\si') \left(f(\si')-f(\si)\right),\ \si\in S, 
\end{equation}
where $q_A(\si,\si')=q(\si,\si')\cdot \chi_A(\si,\si') $, $\chi_A$ is the characteristic function of set $A$ and $f$ is a continuous and bounded function on the state space $ S $. 
\end{definition}

\ouredit{
We assume that the operator $L$ is splitted into $L_1, L_2$, and that both are \textit{restrictions} of $L$. Note that Definition~\ref{def:restriction} is general enough to include the splittings used in PL-KMC. For example, the generators $L_1,L_2$ in~\eqref{eq:splitting_to_groups} are precisely of that form, with the groups $G_i$ playing the role of the sets ``$A$''. From another point of view, restrictions respect the original process in that the transition rates that correspond to $L|_A$ are either the same as the old ones or zero. 
}

Before we can construct an asymptotic estimate for the relative entropy rate, we need to first introduce some of the tools we will use. Let $\si, \si'$ be states of a CTMC on a countable state space and let $q$ be the associated transition rates. Then, a path $\vec{z}=(z_0,\ldots, z_n)$ from $\si$ to $\si'$ is a finite sequence of \ouredit{distinct states $z_i$ such that $z_0=\si, z_n=\si'$, and $\prod_{i=0}^{n}q(z_i,z_{i+1})>0.$} The length of a path will be denoted by $|\vec{z}|=|(z_0,\ldots,z_n)|=n$ and we will use $\mathrm{Path}(\si\to \si')$ for the set of all paths from $\si$ to $\si'$. Thus, we are now able to define a distance between states by looking at the length of the shortest path that connects them.

\begin{definition}[Distance between states]
\label{def:distance}
Let $q$ be the transition rates of a Continuous Time Markov Process over a countable state space $S$. Then, let $\si,\si'\in S$, $\si\neq \si'$. The distance $d_{q}$ between the two states is defined as 
\begin{align}
d_{q}(\si,\si'):=\min\left\{|\vec{z}|:\vec{z}\in \mathrm{Path}(\si\to \si')\right\}
\label{eq:state_distance}
\end{align}
In the case that the two states are disconnected, i.e. $\mathrm{Path}(\si\to \si')=\emptyset$, then $d(\si,\si')=+\infty$. Given those distances, one can also define the diameter of the space as
$$
\diam(S)=\max_{(\si,\si')\in S\times S}\{d(\si,\si')\}.
$$
\end{definition}
This notion of distance comes from graph theory and is known as the geodesic distance. When there is no ambiguity concerning the transition rates used, we will drop the $q$ from the notation, using $d$ instead of $d_{q}$. $d$ is not a metric in the classical sense, since it does not have to be symmetric, that is $d(\si,\si')\neq d(\si',\si)$ in general. However, it satisfies the triangle inequality. In addition, the distances depend only on the transition rates, i.e. they are time independent. We will refer to those distances as the \textit{connectivity} of the state space for the Markov Chain with transition rates $q$. The importance of \ouredit{using such a distance} can be seen in the following result concerning compositions of the infinitesimal generator $L$.
\begin{lemma}
\label{le:supp_oper_pow_control}
Let $L$ be an infinitesimal generator \ouredit{of a Markov process, with corresponding transition rates $q$ and let $\si'$ be some state of the process. Then,}

\begin{align*}
\{\si:L^{n}[\delta_{\si'}](\si)\neq 0\}\subseteq \{\si:d(\si,\si')\leq n\}=B_{n}(\si').
\end{align*}
\end{lemma}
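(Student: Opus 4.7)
The plan is to proceed by induction on $n$, exploiting the fact that the generator $L$ acts \emph{locally} on the graph whose edges are the admissible transitions. The natural induction quantity is the support of $L^n[\delta_{\si'}]$, and the triangle inequality for $d$ ties the recursion to the ball $B_n(\si')$.

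For the base case $n=1$, I would just evaluate $L[\delta_{\si'}](\si)$ directly from the defining formula \eqref{eq:infinit_gen}. When $\si=\si'$, the definition of $d$ gives $d(\si',\si')=0\le 1$, so trivially $\si \in B_1(\si')$. When $\si \ne \si'$, only the $\si''=\si'$ term survives and $L[\delta_{\si'}](\si)=q(\si,\si')$; this is nonzero precisely when the edge $\si\to\si'$ has positive rate, which gives a path of length one, i.e.\ $d(\si,\si')\le 1$. So the base case holds.

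For the inductive step, assume the conclusion holds up to $n-1$ and set $g=L^{n-1}[\delta_{\si'}]$, so $L^n[\delta_{\si'}](\si)=L[g](\si)=\sum_{\si''\in S}q(\si,\si'')\bigl(g(\si'')-g(\si)\bigr)$. If this quantity is nonzero, then at least one of the numbers appearing inside the sum must be nonzero, so one of two alternatives holds: either $g(\si)\ne 0$, or there exists $\si''\ne \si$ with $q(\si,\si'')>0$ and $g(\si'')\ne 0$. In the first alternative the inductive hypothesis gives $d(\si,\si')\le n-1\le n$. In the second alternative, the inductive hypothesis gives $d(\si'',\si')\le n-1$, while $q(\si,\si'')>0$ means that $\si\to\si''$ is a single admissible edge, so $d(\si,\si'')\le 1$. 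The triangle inequality for $d$ (which, as noted just before the lemma, holds even though $d$ need not be symmetric) then yields $d(\si,\si')\le d(\si,\si'')+d(\si'',\si')\le 1+(n-1)=n$. In either case $\si\in B_n(\si')$, closing the induction.

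There is no real obstacle here: the only subtlety worth double-checking is that the distance $d$ is defined via directed paths in the correct orientation, so that $q(\si,\si'')>0$ genuinely implies $d(\si,\si'')\le 1$ (rather than $d(\si'',\si)\le 1$), and that the triangle inequality is applied in the same orientation. Once one is careful about the direction of the edges in Definition \ref{def:distance}, the rest of the argument is a one-line propagation through the generator.
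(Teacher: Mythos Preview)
Your proof is correct and follows the same induction argument the paper uses (the paper only states that the proof is by induction and defers details to supplementary material). The one place worth tightening is the inductive step: a nonzero summand gives $q(\si,\si'')>0$ and $g(\si'')\neq g(\si)$, from which it follows that \emph{at least one} of $g(\si),g(\si'')$ is nonzero---exactly the dichotomy you then use.
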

\begin{proof}
The proof is by induction. Argument can be found in supplementary material.
\end{proof}

In other words, for a fixed state $\si'$, if $d(\si,\si')>n$ then $L^n[\de_\si'](\si)=0$. The  set $B_n(\si')$ contains all states that are connected with $\si'$ with $n-2$ or less in-between states. We will also use the notation $S_n(\si'):=\{\si:d(\si,\si')=n\}$.

Since our primary interest is in studying approximations based on splitting our generator $L$ to $L_1, L_2$, it makes sense to have an extension of the previous result to compositions of $L_1, L_2 $. The following lemma is the generalization of Lemma \ref{le:supp_oper_pow_control} to compositions of restrictions. We will use the notation $L^k|_A$ to denote the $k$th composition of generator $L$ where, instead of the original transition rates, we use $q_A$. 

\begin{lemma}
\label{lem:compo_support}
Let us have the state space $S$ and $S\times S=A\cup B, A\cap B=\emptyset$, along with generators $L_1=L|_{A}$, $L_2=L|_{B}$. We fix $\si'\in S$ and $k, m\in \mathbb{N}$. Then, 
\begin{align*}
\left\{\si:L_1^{k}\left[L_2^{m}[\de_{\si'}]\right](\si)\neq 0\right\}\subseteq\{\si:d(\si,\si')\leq k+m\}.
\end{align*}
\end{lemma}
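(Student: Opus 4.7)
The plan is a straightforward induction on $k$, with the key insight being a simple monotonicity property of restrictions: since $L_j=L|_{A_j}$ has rates $q_{A_j}=q\cdot \chi_{A_j}\leq q$, any admissible path for $q_{A_j}$ (one that uses only edges where $q_{A_j}>0$) is also admissible for $q$, so $d_{q_{A_j}}(\si,\si')\geq d_q(\si,\si')$ for every pair of states. Hence distances measured with restricted rates dominate the original distance $d=d_q$ used in the statement.

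For the base case $k=0$, we have $L_1^{0}[L_2^m[\de_{\si'}]]=L_2^m[\de_{\si'}]$. Applying Lemma~\ref{le:supp_oper_pow_control} to the generator $L_2=L|_B$ yields $L_2^m[\de_{\si'}](\si)\neq 0 \Rightarrow d_{q_B}(\si,\si')\leq m$, which by the monotonicity observation gives $d(\si,\si')\leq d_{q_B}(\si,\si')\leq m$. For the inductive step, set $g:=L_1^k[L_2^m[\de_{\si'}]]$ and assume by induction that $g(\eta)\neq 0$ implies $d(\eta,\si')\leq k+m$. I would then expand
\begin{align*}
L_1[g](\si) \;=\; \sum_{\eta\in S} q_A(\si,\eta)\, g(\eta) \;-\; \lambda_A(\si)\, g(\si), \qquad \lambda_A(\si):=\sum_{\eta\in S} q_A(\si,\eta).
\end{align*}
Suppose $d(\si,\si')>k+m+1$. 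Then $d(\si,\si')>k+m$ forces $g(\si)=0$, and for any $\eta$ with $q_A(\si,\eta)>0$ we have $q(\si,\eta)>0$, hence $d(\si,\eta)\leq 1$; by the triangle inequality $d(\eta,\si')\geq d(\si,\si')-d(\si,\eta)>k+m$, so $g(\eta)=0$. Every term therefore vanishes, which closes the induction and yields $L_1^{k+1}[L_2^m[\de_{\si'}]](\si)=0$ whenever $d(\si,\si')>k+m+1$.

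I do not anticipate a real obstacle. There are no cancellation issues because the vanishing is established term-by-term, exactly mirroring the inductive proof of Lemma~\ref{le:supp_oper_pow_control}. The only point requiring some care is the asymmetry of the geodesic distance $d$, but the triangle inequality is invoked in the correct orientation $d(\si,\si')\leq d(\si,\eta)+d(\eta,\si')$, so no swap of arguments is needed. The same induction scheme extends transparently to compositions of more than two restrictions $L_{j_1}^{k_1}\cdots L_{j_r}^{k_r}$ with total exponent $k_1+\cdots+k_r$, since at each step we only use the bound $q_{A_j}\leq q$.
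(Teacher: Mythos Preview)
Your proof is correct and follows essentially the same approach the paper indicates: an induction mirroring that of Lemma~\ref{le:supp_oper_pow_control}, with the term-by-term vanishing argument in the inductive step. Your explicit use of the monotonicity $d_q\leq d_{q_{A_j}}$ to handle the base case via Lemma~\ref{le:supp_oper_pow_control} applied to $L_2$ is a clean way to package what the paper leaves implicit.
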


\begin{proof}
Induction argument similar to that of Lemma \ref{le:supp_oper_pow_control}, see supplementary materials.
\end{proof}

Lemma \ref{lem:compo_support} can be simply extended to more complicated compositions by the use of similar arguments. Thus, if every composition of $L_1, L_2$ is controlled in the sense of Lemma \ref{lem:compo_support}, then it is not difficult to see that the same control holds for collections of them of the same order, i.e. if we fix $\si'\in S$ and $k\in \mathbb{N}$,
\begin{align}
\{\si:L^k_Q[\de_\si'](\si)\}\subseteq \{\si:d(\si,\si')<k\}.
\label{eq:L^k_supp}
\end{align}

We can use restrictions of generators as building blocks for splitting schemes. A point often made in this work is the importance of the commutator in studying those schemes. Thus, it makes sense to have a relation between connectivity and the commutator.

\begin{lemma}[Support of the commutator]
\label{lem:comm_support}
Let $L$ be the generator of a Markov process and $L_1,L_2$ restrictions of that generator. Let also $\Dt>0$. Then, assume $\Pb$ is an approximation of $\Po$ by using a splitting scheme of order $p$ with associated commutator $C$. Then, for fixed $\si'\in S$,
$$
\{\si:C(\si,\si')\neq 0\}\subseteq \{\si:d(\si,\si')\leq p\}.
$$
\end{lemma}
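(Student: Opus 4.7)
The plan is to prove the claim by contrapositive: if $d(\si,\si')>p$, then both $L^p[\de_{\si'}](\si)$ and $L^p_{Q}[\de_{\si'}](\si)$ vanish, hence their difference, which is precisely $C(\si,\si')$, vanishes as well. I would begin by invoking Lemma~\ref{lem:local_order_of_error}, which identifies the commutator as
\begin{equation*}
    C(\si,\si') = \bigl(L^p - L_Q^p\bigr)\de_{\si'}(\si),
\end{equation*}
so that the support statement for $C(\cdot,\si')$ reduces to a joint support statement for $L^p[\de_{\si'}]$ and $L^p_Q[\de_{\si'}]$.

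For the first term, I would simply quote Lemma~\ref{le:supp_oper_pow_control}: $\{\si:L^{p}[\de_{\si'}](\si)\neq 0\}\subseteq B_p(\si')=\{\si:d(\si,\si')\leq p\}$. For the second term, since $L_1,L_2$ are restrictions of $L$, every composition $L_{i_1}L_{i_2}\cdots L_{i_p}$ appearing (with weights) in $L_Q^p$ is built from $L|_A$-type operators whose transition rates are pointwise bounded by $q$. A straightforward extension of Lemma~\ref{lem:compo_support} (obtained by the same induction, where at each step one moves along an $L_j$-edge, which is also a $q$-edge) shows that for any ordered word in $\{L_1,L_2\}$ of length $p$,
\begin{equation*}
    \bigl\{\si:L_{i_1}L_{i_2}\cdots L_{i_p}[\de_{\si'}](\si)\neq 0\bigr\}\subseteq \{\si:d(\si,\si')\leq p\},
\end{equation*}
which is exactly the content of the inclusion \eqref{eq:L^k_supp}. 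Taking a linear combination over the words that define $L_Q^p$ preserves the support containment.

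Combining the two inclusions, if $d(\si,\si')>p$, then $L^p[\de_{\si'}](\si)=0=L_Q^p[\de_{\si'}](\si)$ and hence $C(\si,\si')=0$. The contrapositive yields the stated inclusion.

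I do not expect a real obstacle here: the lemma is essentially a corollary of Lemma~\ref{le:supp_oper_pow_control}, Lemma~\ref{lem:compo_support}, and \eqref{eq:L^k_supp}, together with the algebraic identity from Lemma~\ref{lem:local_order_of_error}. The only subtlety worth spelling out is that $L_Q^p$ (as defined in~\eqref{eq:L_QofLie} for the Lie case, and analogously for Strang or any other order-$p$ splitting) is a finite linear combination of length-$p$ compositions of $L_1$ and $L_2$, so the support control passes from each individual composition to the full operator without loss.
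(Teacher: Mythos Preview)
Your argument is correct and matches the paper's proof essentially verbatim: invoke the identity $C(\si,\si')=(L^p-L_Q^p)\de_{\si'}(\si)$ from Lemma~\ref{lem:local_order_of_error}, then use Lemma~\ref{le:supp_oper_pow_control} to kill $L^p[\de_{\si'}](\si)$ and \eqref{eq:L^k_supp} to kill $L_Q^p[\de_{\si'}](\si)$ whenever $d(\si,\si')>p$. Your additional remarks about $L_Q^p$ being a linear combination of length-$p$ words just make explicit what the paper packages into \eqref{eq:L^k_supp}.
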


\begin{proof}
In Lemma \ref{lem:local_order_of_error}, we defined the commutator as $C(\si,\si')=(L^p-L^p_Q)\de_{\si'}(\si)$. From Lemma \ref{le:supp_oper_pow_control}, we have that if $d(\si,\si')>p$, then $L^p[\de_\si'](\si)=0$ and from Equation~\eqref{eq:L^k_supp}, $L_Q^p[\de_\si'](\si)=0$. This gives the result.
\end{proof}

\ouredit{When the state space is finite, as in the case of stochastic particle systems on finite lattices, then the commutator $C$ is a matrix indexed by the different states. An implication of Lemma~\ref{lem:comm_support} is that there is a re-ordering of the rows/columns that turns $C$ into a banded matrix. Regardless, we can now prove a general result for the asymptotics of the relative entropy rate.}

\begin{theorem}
\label{th:main_result}
\ouredit{Consider $\Dt\in (0,1)$ and $\Po(\si,\si')=e^{L\Dt}\de_{\si'}(\si)$,} $\Pb(\si,\si')$ be an approximation of $\Po$ based on a splitting scheme with $L_1,L_2$ restrictions of the generator $L$ and $\mb$ the stationary measure corresponding to $\Pb$. Then, if the splitting scheme is of order $p$, we define the \textit{bounded} diameter of the state space as $\hat{k}$,
$$
\hat{k}=\min\{\mathrm{diam}(S),p\}=\min\{\max_{\si,\si'}\{d(\si,\si')\},p\}.
$$
Then, if $C(\si,\si')\neq 0$ for at least one pair $\si,\si'\in S$ such that $d(\si,\si')=\hat{k}$, we have that
$$
H(\Pb|\Po)=O(\Dt^{2p-(\hat{k}+1)}).
$$
\end{theorem}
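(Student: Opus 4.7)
The plan is to adapt the argument from the proof of Theorem~\ref{th:spparks_th}, now using the graph-theoretic Lemmas~\ref{le:supp_oper_pow_control}--\ref{lem:comm_support} to sharpen the order-in-$\Dt$ accounting pair by pair according to the geodesic distance $d(\si,\si')$. I begin by applying the identity $\log x = 2\atanh((x-1)/(x+1))$ to the logarithm inside the normalized RER, which yields the same decomposition as in \eqref{eq:RER-expand}--\eqref{eq:naive:F}:
\begin{align*}
\Dt\cdot H(\Pb|\Po)
  &= -2\sum_{\si,\si'}\mb(\si)\bigl[\Po(\si,\si')-\Pb(\si,\si')\bigr]\\
  &\quad + 2\sum_{\si,\si'}\mb(\si)\,G(\Dt;\si,\si')
    + 2\sum_{\si,\si'}\mb(\si)\,J(\Dt;\si,\si').
\end{align*}
The first sum vanishes because $\Po$ and $\Pb$ are row-stochastic. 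The remaining task is to track the leading order of the $G$ and $J$ sums.

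Next I classify pairs by $k := d(\si,\si')$. Using the semigroup expansions \eqref{eq:semigroup_expansion} and \eqref{eq:gen_split_power_exp} together with the support statement \eqref{eq:L^k_supp} and Lemma~\ref{le:supp_oper_pow_control}, both $\Po(\si,\si')$ and $\Pb(\si,\si')$ start at order $\Dt^k$ and share the same leading coefficient whenever $k<p$; hence $\Pb+\Po=O(\Dt^k)$ at every pair with $k<\infty$. On the other hand, Lemma~\ref{lem:comm_support} forces $C(\si,\si')=0$ whenever $k>p$, so the local-error identity \eqref{eq:PminusQ} gives $\Po-\Pb = O(\Dt^p)$ for $k\le p$ and $O(\Dt^k)$ for $k>p$. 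Plugging these into $(\Po-\Pb)^2/(\Po+\Pb)$, which drives $G$, yields $O(\Dt^{2p-k})$ when $k\le p$ and $O(\Dt^k)$ when $k>p$. Both expressions are minimized (in order) at $k=\hat{k}=\min\{\diam(S),p\}$, giving the leading order $\Dt^{2p-\hat{k}}$; the hypothesis $C(\si,\si')\ne 0$ at some pair at distance $\hat{k}$ prevents this coefficient from collapsing to zero. The $m$-th odd power in the series $J$ produces a contribution of order $\Dt^{(2m+1)p-2mk}$ at distance $k\le p$, whose leading instance $\Dt^{3p-2k}$ is a strictly higher power of $\Dt$ than $\Dt^{2p-k}$ except exactly when $k=p$, where the two orders coincide and one simply adds to the leading coefficient. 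Dividing by $\Dt$ per the normalization \eqref{eq:RER-normalized} then yields $H(\Pb|\Po)=O(\Dt^{2p-(\hat{k}+1)})$.

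The hard part, inherited from the proof of Theorem~\ref{th:spparks_th}, is the infinite series $J$ in the borderline case $k=p$: every odd power contributes at the same order in $\Dt$, so the leading coefficient is a genuine infinite sum rather than a single dominant term. Convergence is secured by the uniform bound $|(\Po-\Pb)/(\Po+\Pb)|<1$ for sufficiently small $\Dt$, which reduces the series to a geometric comparison; this is the same device used at the end of the proof of Theorem~\ref{th:spparks_th}, and it survives the replacement of spin-flip distance by geodesic distance because all bounds depend on $k$ only through the exponent of $\Dt$ and not through any lattice-specific structure.
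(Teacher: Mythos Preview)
Your proposal is correct and follows essentially the same route as the paper: the $\arctanh$ decomposition of the logarithm inherited from Theorem~\ref{th:spparks_th}, combined with the support Lemmas~\ref{le:supp_oper_pow_control}--\ref{lem:comm_support} to determine, for a pair at geodesic distance $k$, the orders of $\Pb$, $\Po\pm\Pb$, and hence of the $G$- and $J$-contributions. Your pair-by-pair accounting over all distances $k$ (with the observation that the minimum exponent occurs at $k=\hat{k}$) is slightly more explicit than the paper's version, which jumps directly to pairs at distance $\hat{k}$, but the substance is the same; note also that the paper swaps the labels $G$ and $J$ relative to Theorem~\ref{th:spparks_th}, whereas you keep them consistent.
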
 
\begin{proof}
The proof of this theorem is the generalization of the argument given for Theorem~\ref{th:spparks_th}. Picking up from formula~\eqref{eq:proof7}, 

\begin{align}
J(\Dt;\si, \si')=\frac{(C(\si,\si'))^2}{2\Pb(\si,\si')+\Dt^pC(\si,\si')+o(\Dt^p)}\Dt^{2p}+o(\Dt^{2p-\hat{k}}),
\label{eq:F_general_th}
\end{align} 
Our goal is to show that $J(\Dt;\si,\si')=O(\Dt^{2p-\hat{k}})$ for some $(\si,\si')$ and that this is the highest order attainable. Next, let us have $(\si,\si')\in S\times S$ such that $d(\si,\si')=\hat{k}$. Then, from~\eqref{eq:gen_split_power_exp} and~\eqref{eq:L^k_supp}, we have that
\begin{align}
\Pb(\si,\si')=\sum_{k=\hat{k}}^{\infty}\frac{L_{Q}^{k}[\de_{\si'}](\si)}{k!}\Dt^k=O(\Dt^{\hat{k}}),\ \Dt\in (0,1].
\label{eq:order_of_pb}
\end{align} 
Thus from \eqref{eq:F_general_th} and \eqref{eq:order_of_pb}, we can expose the first term of the asymptotic expansion of $F$ as
\begin{align}
J(\Dt;\si, \si')=
	\begin{cases}
		\frac{(C(\si,\si'))^2}{2L_Q^{\hat{k}}[\de_{\si'}](\si)/k!}\Dt^{2p-\hat{k}}+o(\Dt^{2p-\hat{k}}),& \hat{k}<p,\\
		\frac{(C(\si,\si'))^2}{2L_Q^{\hat{k}}[\de_{\si'}](\si)/k!+C(\si,\si')}\Dt^{p}+o(\Dt^{p}),& \hat{k}=p.
	\end{cases}
\end{align}

Next, we need to address the contribution of the rest of the expansion used (see proof of Theorem \ref{th:spparks_th}), that is
\begin{align*}
G(\Dt;\si,\si')=\Pb(\si,\si')\sum_{k=1}^{\infty}\frac{1}{2k+1}\left(\frac{\Pb(\si,\si')-\Po(\si,\si')}{\Pb(\si,\si')+\Po(\si,\si')}\right)^{2k+1}.
\end{align*}
If $\hat{k}<p$, then $G(\Dt;\si,\si')=O(\Dt^{3p-2\hat{k}})$, which are lower order terms given that $\Dt\leq 1$. However, if $\hat{k}=p$, $G(\Dt;\si,\si')=O(\Dt^{p})$ and in fact every term of the series in $G$ is of that order. 

Finally, $H(\Pb|\Po)$ can never have higher order than $p-1$, as that would require $(\si,\si')$ such that $d(\si,\si')>p+1$ and then $C(\si,\si')=0$ (from Lemma \ref{lem:comm_support}).
\end{proof}

\ouredit{The assumption on the commutator in Theorem~\ref{th:main_result} is simple to check for parallel KMC, as we can write down the commutator $C(\si,\si')$ explicitly. For example, for Lie, $C(\si,\si')$ is given by Equation~\eqref{eq:comm_Lie}, so checking the assumption is just a matter of calculation. Additionally, to find the bounded diameter $\hat{k}=\min\{\diam(S),p\}$, it is sufficient to have lower bounds for the diameter, $\diam(S)$, as the order of the local error of the scheme, $p$, will typically be much smaller. Example~\ref{sec:Markov-Chain-Example} shows a case where $p$ is close to $\diam(S)$ and the implications this has for the RER.
}

\subsection{Markov chain example}
\label{sec:Markov-Chain-Example}
In order to illustrate the connectivity-RER relation, we are studying a simple example where we can compute the RER and all related quantities explicitly, either by hand or any symbolic algebra system. All calculations of the RER in this example are not from sampling but by using definition \eqref{eq:rer}.

We study the case of a Markov process with transition rate matrix, $Q$ and $\diam(S)=2$. \ouredit{We consider a positive $\Dt$, $\Dt<1$, and}
$$
Q=\left(
\begin{array}{ccc}
 -3 & 1 & 2 \\
 3 & -4 & 1 \\
 1 & 0 & -1 \\
\end{array}
\right).
$$
Given this, we can calculate the transition probability matrix of the Markov chain as the matrix exponential of $Q$, $\Po(\si,\si')=\exp(\Dt Q)\de_{\si'}(\si)$. Our system has diameter equal to two since $Q_{3,2}=0$ but $Q_{3,1}\cdot Q_{1,2}\neq 0$. We can construct approximations of $\Po$ by splitting $Q$ into \ouredit{components $A,B$ with $Q=A+B$, similarly to how we expressed the generator $L$ as $L_1+L_2$}. One way to do this is
$$
A=\left(
\begin{array}{ccc}
 -3 & 1 & 2 \\
 3 & -4 & 1 \\
 0 & 0 & 0 \\
\end{array}
\right), B=\left(
\begin{array}{ccc}
 0 & 0 & 0 \\
 0 & 0 & 0 \\
 1 & 0 & -1 \\
\end{array}
\right).
$$
Thus, one approximation of $\exp(Q\Dt)$ could be $\exp(A\Dt)\exp(B\Dt)$, which corresponds to the Lie splitting. From Theorem \ref{th:main_result}, since $\diam(S)=p=2$, we expect $H(\Pbl|\Po)=O(\Dt^1)$. This is indeed the case, as,
$$
H(\Pbl|\Po)\simeq 0.124 \Dt-0.0566 \Dt^2+O\left(\Dt^3\right).
$$
The use of $\simeq$ comes from a truncation of the coefficients to three significant digits. We can work similarly with the Strang splitting, now using $\exp(A\Dt/2)\exp(B\Dt)\cdot \exp(A\Dt/2)$ as the approximation to $\Po$. The local order of the Strang splitting is $p=3$, so we expect that $H(\Pbs|\Po)=O(\Dt^{2\cdot 3-3})=O(\Dt^3)$ (see Theorem~\ref{th:main_result}). This can be readily demonstrated by a calculation of the RER, followed by the derivation of its asymptotic expansion:
$$
H(\Pbs|\Po)\simeq 0.0279 \Dt^3+0.000672 \Dt^4+O\left(\Dt^5\right).
$$

\section{Quantifying information loss in transient regimes}
\label{sec:transient}
\ouredit{In this last section, we consider the case where we wish to study the performance of the operator splitting scheme in a transient regime, before convergence to the stationary distribution takes place. Note that in the proofs of Theorems~\ref{th:spparks_th} and~\ref{th:main_result}, we derived the asymptotic expressions of the various quantities without referring to the stationary measure $\mb$. Therefore those results do not depend on the choice of the sampling measure. That is, with the assumptions of Theorem~\ref{th:main_result} and $\nu$ a probability distribution on the state space $S^M$ such that $\nu(\si)>0$ for all states $\si$, then 
\begin{align}
H_{\nu}(\Pb|\Po)=\sum_{\si\in S^M}\nu(\si)\Pb(\si_0,\si_1)\frac{\Pb(\si_0,\si_1)}{\Po(\si_0,\si_1)}=O(\Dt^{2p-\hat{k}}).
\label{eq:general_rer}
\end{align}
}
\ouredit{Therefore, the order of the RER is independent of the sampling measure. As a result, we gain Theorem~\ref{th:general_initial_dist}, an extension of Theorem~\ref{th:main_result} to transient time regimes.}
\begin{theorem}
With the assumptions of Theorem~\ref{th:main_result} for the RER, we have that for any $T>0$:
\begin{align}
\frac{R(Q_{0:T}|P_{0:T})}{T} = \frac{R(\mu_0|\nu_0)}{T}+O(\Dt^{2p-\hat{k}}).
\end{align}
\label{th:general_initial_dist}
\end{theorem}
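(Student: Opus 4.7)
The plan is to combine the chain-rule decomposition of relative entropy \eqref{eq:gen_re_rer_rel} with the sampling-measure-agnostic asymptotic \eqref{eq:general_rer} stated immediately before the theorem. Applying the chain rule directly gives
\begin{equation*}
R(Q_{0:T}|P_{0:T}) \;=\; R(\mu_0|\nu_0) \;+\; \sum_{i=1}^{M} H_{\nu_i}(Q|P),
\end{equation*}
where the path marginals $\nu_i(x_0,\ldots,x_{i-1}) = \nu_0(x_0)\prod_{m=1}^{i-1}Q(x_{m-1},x_m)$ are the ones introduced just after \eqref{eq:gen_re_rer_rel}. Dividing by $T = M\Dt$ isolates the initial-data contribution $R(\mu_0|\nu_0)/T$, and the claim reduces to showing that the Ces\`aro-type average $(M\Dt)^{-1}\sum_{i=1}^{M} H_{\nu_i}(Q|P)$ is of order $\Dt^{2p-\hat{k}}$.

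For this, I would invoke \eqref{eq:general_rer}: the proof of Theorem \ref{th:main_result} relies only on pointwise asymptotic expansions of $\Pb(\si,\si')$, $\Po(\si,\si')$, and of the integrand $\Pb(\si,\si')\log(\Pb(\si,\si')/\Po(\si,\si'))$, and never uses invariance of $\mb$. Consequently, $\mb$ can be replaced by any probability measure $\nu$ on $S$ without affecting the bound, so each $H_{\nu_i}(Q|P)$ is of order $\Dt^{2p-\hat{k}}$ with a constant that depends only on the transition rates through the commutator $C$ and the weights appearing in the proof of Theorem \ref{th:main_result}, and in particular not on the specific marginal $\nu_i$. Averaging the $M$ identical bounds and combining with the chain-rule identity above yields the stated expansion.

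The step I would scrutinise most carefully is precisely this uniformity of the $O$-constant in $i$. Since the state space is finite and each $\nu_i$ is a probability measure, a pointwise estimate of the form $|\Pb(\si,\si')\log(\Pb(\si,\si')/\Po(\si,\si'))|\le K(\si,\si')\,\Dt^{2p-\hat{k}}$ integrates against $\nu_i$ to at most $(\max_{\si,\si'} K(\si,\si'))\cdot \Dt^{2p-\hat{k}}$, a bound independent of $i$ and of $M$. This observation is what makes the transient extension a genuine corollary of the stationary analysis rather than a separate argument; the only mild subtlety is requiring that $\nu_0$ (hence each $\nu_i$) be positive wherever $\Pb$ charges, so that the logarithms and the denominators appearing in the pointwise expansion of $G$ and $J$ in the proof of Theorem \ref{th:main_result} remain well-defined.
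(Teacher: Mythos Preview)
Your proposal is correct and follows essentially the same route as the paper: invoke the chain-rule decomposition \eqref{eq:gen_re_rer_rel} (restated as \eqref{eq:gen_re_rer_rel_2}), apply the measure-agnostic estimate \eqref{eq:general_rer} to each $H_{\nu_i}$, and conclude. Your explicit discussion of the uniformity in $i$ of the $O$-constant---via the pointwise bound on a finite state space---is a welcome clarification that the paper leaves implicit; otherwise the two arguments coincide.
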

Theorem~\ref{th:general_initial_dist} is implied by the decomposition of the relative entropy in terms of rates that depend on $\nu_i$ (first discussed in Section~\ref{ssec:info_theory_concepts}). \ouredit{If $M$ is a positive integer, $\Dt$ is the scheme's time-step, and $T=M\Dt$}, then
\begin{align}
R(Q_{0:T}|P_{0:T})&=R(\mu_0|\nu_0)+\sum_{i=1}^{M}H_{\nu_i}(\Pb|\Po).
\label{eq:gen_re_rer_rel_2}
\end{align}
\begin{proof}[Proof of Theorem~\ref{th:general_initial_dist}]
From Equation~\eqref{eq:general_rer} we have that the order of the RER does not depend on the sampling measure $\nu$, as long as $\nu(\si)>0$ for all $\si$. Therefore, $H_{\nu_i}(\Pb|\Po)=O(\Dt^{2p-\hat{k}})$ for $i=1,\ldots,M$. This, combined with Equation~\eqref{eq:gen_re_rer_rel_2}, implies the result. 
\end{proof}

\ouredit{Therefore, our results about the RER are applicable for parallel KMC even for practitioners that are interested in simulating the dynamics in the transient regime.}

\firstRef{
\begin{remark}[Relative Entropy Rate vs.\ path-wise Relative Entropy]
In Section~\ref{ssec:info_theory_concepts}, we saw that, in the stationary regime, we can relate the path-wise relative entropy with the RER via
$$
R(\Qpath|\Ppath)=TH(\Pb|\Po)+R(\Pb|\Po).
$$
In this section, we connected the RER with the RE for transient regimes by using Relation~\eqref{eq:gen_re_rer_rel_2}. Ultimately, those relations motivate the use of the RER as an information criterion in place of the path-wise RE, but there are other advantages too: 
\begin{enumerate}
\item The RER does not depend on the length of the simulated path. Additionally, it can be estimated from a single path, while the path-wise RE requires several. 
\item For large $T$, the RE and RER encapsulate the same amount of information about the similarity of $\Pb$ and $\Po$. 
\end{enumerate}
\end{remark}
}

\section{Conclusions}
\label{sec:Conc}
We introduced the relative entropy rate (RER), i.e.\  path space relative entropy  per unit time,  as a means  to quantify the long-time accuracy of splitting schemes for stochastic dynamics and in particular Parallel KMC algorithms. 
We demonstrated, using   {\em a posterirori} error expansions, the dependence of RER on the following elements: the  local error analysis of the splitting schemes captured by 
the operator commutators; the local error order $p$ and 
the splitting time step $\Dt$, which in the case of Parallel KMC controls the asynchrony between processors; the diameter of the graph associated with the approximated Markov jump process.

Based on this analysis, we showed  that  RER defines  a computable path-space  information criterion that allows to compare,  select and design  different splitting schemes,  taking into account both  error tolerance  (e.g. accuracy of the scheme) and practical concerns such as  asynchrony and  processor communication cost. \ouredit{It is also appropriate to think of the RER as a diagnostic quantity that can be estimated on systems of smaller size and consequently be used to compare schemes and tune parameters without slowing down the target simulation.}

Finally we note that  numerical analysis of   stochastic systems is typically concerned with controlling  the weak error for  observable functions $\phi$,
\begin{align}
\sup_{0\leq n\leq N}\left | \mathbb{E}_{P_{0:T}}[\phi(X(n\Dt))] -\mathbb{E}_{Q_{0:T}}[\phi(X_n)]\right|,
\label{eq:weak_error}
\end{align}
where $X_{n}$ represents the approximate chain and $X(n\Dt)$ the $\Dt$-skeleton chain of the exact process, $T=M\cdot \Dt$. However, our results measure the information loss on path space between 
the approximate chain and  the $\Dt$-skeleton chain of the exact process, using RER. Controlling RER also implies upper bounds for observables at long times, using uncertainty quantification information inequalities developed in~\cite{Dupuis, jie-scalable-info-ineq}. We also showed how those results can be extended to finite-time regimes.

\bibliography{rer_bib}{}
\bibliographystyle{unsrt}

\appendix
\section{Coefficients of the Relative Entropy Rate for Lie and Strang}
\label{app:coeff-ads-deso}

\ouredit{For the adsorption-desorption example considered in Section~\ref{sec:ising_example} of the main text we need to estimate the highest-order coefficients $A,B$ for Lie and Strang respectively}. \ouredit{To accomplish this, we have} to collect all the coefficients of $\Dt$ and $\Dt^2$ that appear in the expansion of RER in the proof of Theorem~\ref{th:spparks_th}. The result is a summable series for each coefficient. For Lie, we have 
\begin{align}
&A=\mathbb{E}_{\mu_{L}(\si)}\left[\sum_{x,y\in \Lambda}F_L(\si,\si^{x,y})\right]=\sum_{\si}\mu_{L}(\si)\sum_{x,y\in \Lambda}F_L(\si,\si^{x,y}),\label{eq:exact_a}\\
&F_L(\si,\si'):=C_{L}(\si,\si')M_L(\si,\si')-2L^2_{L}[\de_{\si'}](\si)(\mathrm{arctanh}(M_L(\si,\si'))-M_L(\si,\si')),\label{eq:FA}\\
&M_L(\si,\si'):=C_{L}(\si,\si')/(L^{2}_{L}[\de_{\si'}(\si)]+C_{L}(\si,\si')),\nonumber
\end{align}
where we remind the reader that $L^2_{L}$ stands for all the coefficients of $\Dt^2/2$ in the expansion of the Lie splitting and $C_{L}(\si,\si')=1/2[L_1,L_2][\de_{\si'}](\si)$ is the Lie commutator term. Similarly, for the Strang case,
\begin{align}
&B=\mathbb{E}_{\mu_{S}(\si)}\left[\sum_{x,y,z\in \Lambda}F_S(\si,\si^{x,y,z})\right]=\sum_{\si}\mu_{S}(\si)\sum_{x,y,z\in \Lambda}F_S(\si,\si^{x,y,z}),\label{eq:exact_b}\\
&F_S(\si,\si'):=C_S(\si,\si')M_S(\si,\si')-2L^3_{S}[\de_{\si'}](\si)(\mathrm{arctanh}(M_S(\si,\si'))-M_S(\si,\si')),\label{eq:GB}\\
&M_S(\si,\si'):=C_S(\si,\si')/(L^{3}_{S}[\de_{\si'}](\si)+C_S(\si,\si')).\label{eq:MB}
\end{align}
\ouredit{Since both Equation~\eqref{eq:exact_a} and Equation~\eqref{eq:exact_b} are expected values, we can estimate them as ergodic averages.}

\end{document}